\numberwithin{equation}{section}
 \newcommand{\field}[1]{\mathbb{#1}}
 \newcommand{\Z}{\field{Z}} \newcommand{\R}{\field{R}}
 \def\Im{{\rm Im}} 
 \newcommand{\til}[1]{\widetilde{#1}} 
    \newcommand{\tD}{\til{D}}
  \DeclareMathOperator{\rk}{rk} \DeclareMathOperator{\Id}{Id} 
 \DeclareMathOperator{\tr}{Tr} \DeclareMathOperator{\str}{Tr_{s}}
 \DeclareMathOperator{\End}{End}
   \newcommand{\om}{\omega}
 \newcommand{\iprod}[1]{\langle #1\rangle }
 \newtheorem{thm}{Theorem}[section] \newtheorem{lemma}[thm]{Lemma}  \newtheorem{prop}[thm]{Proposition} 
  \theoremstyle{definition}  \theoremstyle{definition}
 \newtheorem{defn}[thm]{Definition} %
 \theoremstyle{remark} %
 \newcommand{\be}{\begin{eqnarray}}
 \newcommand{\comment}[1]{}
\newcommand{\onehalf}{\frac{1}{2}}
\newcommand{\Cinf}[1]{C^{\infty}\left( #1\right) }
\newcommand{\ra}{\rightarrow}
\newcommand{\lra}{\longrightarrow}
\newcommand{\rac}[1]{\overset{#1}\ra}
\newcommand{\n}{\nabla}
\newcommand{\set}[1]{\{#1\}}
\def\cH{\mathscr{H}}
\def\cT{\mathscr{T}}
\def\cE{\mathscr{E}}
\newcommand{\onefour}{\frac{1}{4}}
\newcommand{\G}{\Gamma}
\newcommand{\nF}{\n^{F}}
\newcommand{\hF}{h^{F}}
\newcommand{\gTY}{g^{TY}}
\newcommand{\gTZ}{g^{TZ}}
\newcommand{\paru}{\frac{\partial}{\partial u}}
\newcommand{\TMgh}{\mathscr{T}_{\rm r}(T^HM,g^{TZ},h^F)}
\newcommand{\TMgha}{\mathscr{T}_{\rm a}(T^HM_1,g^{TZ_1},h^F)}
\newcommand{\TMghb}{\mathscr{T}_{\rm a, r}(T^HM_2,g^{TZ_2},h^F)}
\newcommand{\TMghc}{\mathscr{T}_{\rm a,a}(T^HM_3,g^{TZ_3},h^F)}
\newcommand{\OZF}{\Omega^\bullet(Z,F)}
\newcommand{\OZbF}{\Omega^\bullet(Z_b,F)}
\newcommand{\OMF}{\Omega^\bullet(M,F)}
\newcommand{\oTZ}{o(TZ)}\newcommand{\dvZ}{dv_Z}
\newcommand{\hOZF}{h^{\OZF}}
\newcommand{\PTZ}{P^{TZ}}
\newcommand{\nOZF}{\n^{\OZF}}
\newcommand{\nOZFa}{\n^{\OZF*}}
\newcommand{\dMa}{d^{M*}}
\newcommand{\iTa}{i_T^*}
\newcommand{\dZa}{d^{Z*}}
\newcommand{\en}{e_{\mathfrak{n}}}
\newcommand{\enu}{e^{\mathfrak{n}}}
\newcommand{\parxm}[1]{\frac{\partial #1}{\partial x_m}}
\newcommand{\dxm}{dx_m}\newcommand{\dxmu}{(dx_m)^*}
\newcommand{\ear}[3]{e_{a,r}^{#1 \frac{\partial^2}{\partial x^2_m}}(#2,#3)}
\newcommand{\eaa}[3]{e_{a,a}^{#1 \frac{\partial^2}{\partial x^2_m}}(#2,#3)}
\newcommand{\eDD}[3]{e_{D,D}^{#1 \frac{\partial^2}{\partial x^2_m}}(#2,#3)}
\newcommand{\eNN}[3]{e_{N,N}^{#1 \frac{\partial^2}{\partial x^2_m}}(#2,#3)}
\newcommand{\eND}[3]{e_{N,D}^{#1 \frac{\partial^2}{\partial x^2_m}}(#2,#3)}
\newcommand{\eDN}[3]{e_{D,N}^{#1 \frac{\partial^2}{\partial x^2_m}}(#2,#3)}
\newcommand{\ak}{\alpha_k}
\newcommand{\zpDN}{\zeta'_{D,N}}\newcommand{\zpDD}{\zeta'_{D,D}}
\newcommand{\zDN}{\zeta_{D,N}}\newcommand{\zDD}{\zeta_{D,D}}
\newcommand{\intzi}{\int_{0}^{\infty}}
\newcommand{\tzeta}{\widetilde{\zeta}}
\newcommand{\Xmoo}{X_{[-1,1]}} \newcommand{\Ymoo}{Y_{[-1,1]}}
\newcommand{\TMz}{\mathscr{T}(T^HM,g^{TZ},h^F)}
\newcommand{\TrMa}{\mathscr{T}_{\rm r}(T^HM_1,g^{TZ_1},h^F)}
\newcommand{\TaMa}{\mathscr{T}_{\rm a}(T^HM_1,g^{TZ_1},h^F)}
\newcommand{\TaMb}{\mathscr{T}_{\rm a}(T^HM_2,g^{TZ_2},h^F)}
\newcommand{\TaaMc}{\mathscr{T}_{\rm a,a}(T^HM_3,g^{TZ_3},h^F)}
\newcommand{\cTX}{\mathscr{T}(T^HX,g^{TY},h^F)}
\begin{document}

 \title{A comparison of the absolute and relative real analytic torsion forms}
 \date{\today}
\author{Jialin Zhu}
 \address{Mathematical Science Research Center, Chongqing University of Technology, No. 69 Hongguang Road, Chongqing 400054, China}
 \email{jialinzhu@cqut.edu.cn}

\begin{abstract}
In this paper we establish a comparison formula of the absolute and relative real analytic torsion forms over fibrations with boundaries. The key tool is a gluing formula of analytic torsion forms proved by M. Puchol, Y. Zhang and the author. As a consequence of the comparison formula, we prove another version of the gluing formula of the analytic torsion forms conjectured originally by the author.
\end{abstract}

\maketitle

\tableofcontents
\setcounter{section}{-1}

\section{Introduction}\label{s0}

In this paper, we first prove a comparison formula of the absolute and relative Bismut-Lott real analytic torsion forms by using the gluing formula of the Bismut-Lott torsion forms established recently by M. Puchol, Y. Zhang and the author \cite{PZZ2}. As a consequence of the comparison formula, we prove another version of the gluing formula of the Bismut-Lott analytic torsion forms conjectured originally by the author in \cite[Conjecture 1.1]{Zhu15}

 As the analytic analogue of the Reidemeister topological torsion \cite{Mil66}, Ray
 and Singer \cite{RaySing71} introduced the Ray-Singer analytic torsion associated to de Rham complex twisted by a flat vector bundle $F$ over a
compact oriented Riemannian
manifold $M$. They also conjectured that the topological torsion and analytic torsion coincide,
proved by Cheeger \cite{Chg} and M\"{u}ller \cite{Mu78} independently in the unitary case.
Bismut and Zhang \cite{BZ92} and M\"{u}ller \cite{Mu93} simultaneously considered its generalizations. M\"{u}ller extended
his result to the case where the dimension of the manifold is odd and only the metric induced on $\det{F}$ is required to be flat.
Bismut and Zhang generalized the original Cheeger-M\"{u}ller theorem to arbitrary flat vector bundles with arbitrary Hermitian
metrics.

In \cite{BL}, Bismut and Lott generalized the Ray-Singer torsion to smooth fibrations. They first proved a family index theorem for flat vector bundles, which relates the odd characteristic classes of a flat complex vector bundle on the total
 space of the fibration to those of its direct image on the base. Then they improved their theorem to the level of differential forms, where the Bismut-Lott analytic torsion forms appear as the transgression terms.
 Inspired by \cite{BL},
Igusa and Klein
\cite{Igusa02} extended the Reidemeister torsion to the family case and obtained the Igusa-Klein
topological torsion.

L\"{u}ck \cite{Luck93} established the gluing formula for the analytic torsion for unitary flat vector bundles when the Riemannian metric has product structure near the boundary by using the results in \cite{LoRo91}.
 There are also other works on the gluing problem of the
 analytic torsion (cf. \cite{Has},  \cite{Vishik95}). Finally,
Br\"{u}ning and Ma \cite{BruMa06} established the anomaly formula of the analytic torsion on manifolds with boundary,
then they \cite{BM13} proved the gluing formula of analytic torsion for any flat vector bundles and without any assumptions on the product structures near the boundary. In \cite{BM13}, Br\"{u}ning and Ma essentially used the equivariant Bismut-Zhang theorem \cite{BisZh94}. In \cite{PZZ21},  We give a pure
analytic proof of Br\"{u}ning-Ma's gluing formula while the product structures of metrics are assumed.

The gluing formula of Bismut-Lott torsions was first established in two important cases by the author in \cite{Zhu15}, \cite{Zhu16}. In \cite{Zhu15}, the gluing formula was proved under the assumption that there exists fiberwise Morse function. The proof is a generalization of Br\"uning-Ma's proof in \cite{BM13} to family case, based on the work of Bismut-Goette \cite{BGo}. In \cite{Zhu16}, the gluing formula was established by using adiabatic limit under the assumption that the fiberwise Dirac operator of the boundary is invertible. Recently, we proved the gluing formula in the general case \cite{PZZ2} by using adiabatic limit, scattering theory and a Witten-type deformation. Our gluing formula plays a key role in establishing the equivalence between Bismut-Lott torsion and Igusa-Klein torsion under Igusa's axiomatization \cite{Igu08} of higher torsion invariants \cite{PZZ3}.

Let $M\rac{\pi}S$ be a smooth fibration over $S$ with a compact $m$-dimensional fiber $Z$. We suppose that $X$ is a compact hypersurface in $M$ such that $M=M_{1}\cup_{X} M_{2}$ and $M_{1}, M_{2}$ are
 manifolds with the common boundary $X$. We also assume that
$$
Z_{1}\rightarrow M_{1}\overset{\pi}\rightarrow S, \quad Z_{2}\rightarrow M_{2}\overset{\pi}\rightarrow S, \quad \text{and } Y\rightarrow X\rac{\pi_{\partial}} S
$$
are all smooth fibrations with fiber $Z_{1, b}$, $Z_{2, b}$ and $Y_{b}$ at $b\in S$ such that
\begin{align}\begin{aligned}\label{e.359}
Z_{b}=Z_{1, b}\cup_{Y_{b}}Z_{2, b}.
\end{aligned}\end{align}
In other words, the fibrations $M_{1}$ and $M_{2}$ can be glued into $M$ along $X$ (see Figure \ref{figure2}).

Let $\big(H(Z_{2}, F),\, \nabla^{(Z_{2}; F)}\big)$ (resp. $\big(H(Z_{1}, Y;F),\, \nabla^{H(Z_{1}, Y, F)}\big)$) denote the flat vector bundle on $S$, whose fiber at $b\in S$ is isomorphic to the absolute (resp. relative) cohomology group
$H^{p}(Z_{2, b}, F)$ (resp. $H(Z_{1}, Y;F)$) of fiberwise de Rham complex \cite[\S III.(f)]{BL}.  Then we have a long exact sequence $(\mathscr{H},v)$ of flat vector bundles of cohomology groups (cf. \cite[(0.16)]{BM13}), i.e.,
\begin{align}\begin{aligned}\label{e.434}
\cH:\quad\cdots \longrightarrow H^{p}(Z, F)\overset{\beta_p}{\longrightarrow} H^{p}(Z_{2}, F) \overset{\delta_p}{\longrightarrow}
H^{p+1}(Z_{1}, Y, F) \overset{\alpha_{p+1}}{\longrightarrow}\cdots,
\end{aligned}\end{align}
where $\alpha:Z\rightarrow (Z,Z_{2})$ and $\beta:Z_{2}\rightarrow Z$ are natural maps.
The $\mathbb{Z}-$grading of (\ref{e.434}) on $H^{p}(Z,F)$, $H^{p}(Z_{2},F)$ and $H^{p}(Z_{1},Y,F)$ are given by $3p+1$, $3p+2$ and $3p$ respectively with $0\leq p \leq m$. Let $h^{\mathscr{H}}_{L^{2}}$ be the $L^{2}-$metric on each component of $\mathscr{H}$ induced by the Hodge theory.
Let $\nabla^{\mathscr{H}}$ denote the canonical flat connection on $\cH$.
Then a torsion form $\cT_{\cH}$ is associated with
the triple $(\mathscr{H}, \nabla^{\mathscr{H}},h^{\mathscr{H}}_{L^{2}})$ as in \cite[\S II]{BL}.

\begin{figure}
  \centering
  \includegraphics[width=10cm]{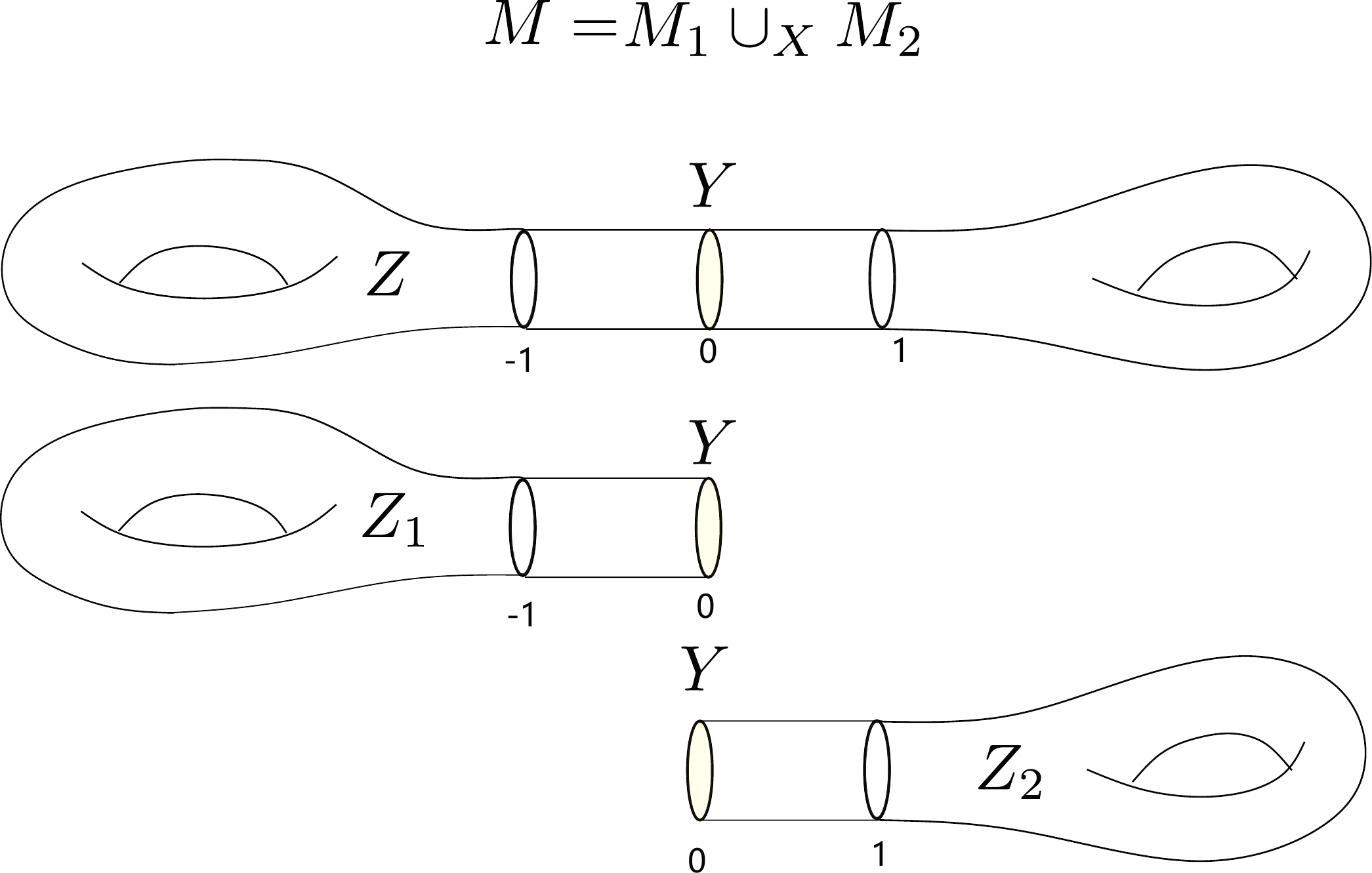}\\
\caption{Gluing relations}\label{figure2}
\end{figure}

Let $\cT_{a/r}(T^HM_1,g^{TZ_1},h^F)$ be the Bismut-Lott torsion forms with the absolute/relative boundary conditions (see \eqref{e.21}). The following theorem proved in Theorem \ref{t.1} is a comparison formula of the absolute/relative Bismut-Lott torsion forms.
\begin{thm}\label{t.3} Under assumptions of product structures \eqref{e.1}-\eqref{e.3}, we have
  \begin{align}\begin{aligned}\label{e.43}
&\cT_r(T^HM_1,g^{TZ_1},h^F)-\TMgha +\mathscr{T}_{\mathscr{H}'}\\
&\qquad\qquad\doteq-\cT(T^HX,g^{TY},h^F)+
 \frac{3}{2}\log 2\rk(F)\chi(Y),
\end{aligned}\end{align}
where ``$\doteq$'' means that the equality holds modulo some exact differential forms over $S$ and $\mathscr{T}_{\mathscr{H}'}$ is the torsion form associated with
  \begin{align}\begin{aligned}\label{e.44}
\cH':\quad\cdots \ra      H^k(Z_1,Y;F) \ra H^k(Z_1;F) \ra H^k(Y;F)\ra \cdots.
\end{aligned}\end{align}
\end{thm}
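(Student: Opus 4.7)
The plan is to establish Theorem~\ref{t.3} by applying the gluing formula of \cite{PZZ2} to the double of $M_1$ along $X$ and combining this with the canonical $\Z/2$-symmetric decomposition of the analytic torsion on the double.

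Thanks to the product structures \eqref{e.1}--\eqref{e.3}, one first forms the closed smooth fibration $\widehat{M}_1 := M_1 \cup_X M_1$ over $S$, with fiber $\widehat{Z}_1 = Z_1 \cup_Y Z_1$, carrying an involution $\sigma$ that exchanges the two copies; the flat bundle $F$ extends mirror-symmetrically across $X$. Applying the gluing formula of \cite{PZZ2} to this decomposition yields an identity
\[
\cT(T^H\widehat{M}_1, g^{T\widehat{Z}_1}, h^F) - \cT_r(T^HM_1, g^{TZ_1}, h^F) - \cT_a(T^HM_1, g^{TZ_1}, h^F) + \mathscr{T}_{\widehat{\cH}} \doteq -\cT(T^HX, g^{TY}, h^F) + c_1 \log 2,
\]
where $\widehat{\cH}$ is the long exact sequence \eqref{e.434} for the doubling decomposition and $c_1$ is an explicit multiple of $\rk(F)\chi(Y)$. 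Independently, the involution $\sigma$ decomposes the fiberwise de Rham Laplacian on $\widehat{M}_1$ into symmetric and antisymmetric eigenspaces, which coincide, up to a factor-of-$\sqrt 2$ rescaling of the $L^2$-inner product coming from the two copies, with the Laplacians on $M_1$ under absolute and relative boundary conditions respectively. Tracking this scaling gives a second expression
\[
\cT(T^H\widehat{M}_1, g^{T\widehat{Z}_1}, h^F) = \cT_a(T^HM_1, g^{TZ_1}, h^F) + \cT_r(T^HM_1, g^{TZ_1}, h^F) + c_2 \log 2.
\]

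Combining these two expressions for $\cT(\widehat{M}_1)$ determines $\mathscr{T}_{\widehat{\cH}}$ in terms of $\cT(T^HX, g^{TY}, h^F)$ and an explicit multiple of $\log 2$. The next step is to relate $\mathscr{T}_{\widehat{\cH}}$ to $\mathscr{T}_{\cH'}$: using the $\Z/2$-splitting $H^*(\widehat{Z}_1, F) \cong H^*(Z_1, F) \oplus H^*(Z_1, Y; F)$, the maps $\alpha_p, \beta_p, \delta_p$ of $\widehat{\cH}$ are computed via the even/odd extension procedure in terms of the natural maps $j \colon H^*(Z_1, Y; F) \to H^*(Z_1, F)$ and $i^* \colon H^*(Z_1, F) \to H^*(Y, F)$ of the pair sequence $\cH'$. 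A Bismut--Lott-type torsion computation on the decomposed exact sequence, carefully accounting for the $L^2$-metric discrepancies between those inherited from $\widehat{Z}_1$ and those inherited from the two separate copies of $Z_1$, should express $\mathscr{T}_{\widehat{\cH}}$ as $\mathscr{T}_{\cH'}$ plus a correction incorporating the difference $\cT_r - \cT_a$ and an explicit $\log 2$ term. Rearranging the resulting identity and combining with the earlier steps then yields \eqref{e.43}.

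The main obstacle will be this last step: deriving the precise relation between $\mathscr{T}_{\widehat{\cH}}$, $\mathscr{T}_{\cH'}$, and the analytic torsion difference $\cT_r(M_1) - \cT_a(M_1)$, and verifying that all the $\log 2$ contributions combine to produce the coefficient $\frac{3}{2}\rk(F)\chi(Y)$. The delicate point is that the asymmetry between $Z_1^{(1)}$ and $Z_1^{(2)}$ in \eqref{e.434} (where the second copy contributes absolute cohomology while the first contributes relative cohomology) must be exploited to convert what could be a symmetric $\cT_r + \cT_a$ contribution into the desired asymmetric $\cT_r - \cT_a$ combination. This requires a careful analysis of how the Bismut--Lott torsion form of an exact sequence of flat vector bundles transforms under changes of Hodge $L^2$-metric.
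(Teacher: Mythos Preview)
Your proposal takes a genuinely different route from the paper, but as written it contains a circularity and an unverified step.

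\textbf{The circularity.} In your first displayed identity you claim that \cite{PZZ2} applied to the double $\widehat{M}_1=M_1\cup_X M_1$ produces
\[
\cT(\widehat{M}_1)-\cT_r(M_1)-\cT_a(M_1)+\mathscr{T}_{\widehat{\cH}}\doteq -\cT(X)+c_1\log 2,
\]
with $\widehat{\cH}$ the pair sequence \eqref{e.434}. That is not what \cite{PZZ2} gives. The gluing formula of \cite{PZZ2} is the Mayer--Vietoris form (cf.~\eqref{e.4} and \eqref{e.46} in the paper): both pieces carry \emph{absolute} boundary conditions at the cutting hypersurface, and the cohomological term is the Mayer--Vietoris sequence, not the pair sequence. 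Applied to the double you would obtain
\[
\cT(\widehat{M}_1)-2\,\cT_a(M_1)+\cT_{a,a}(X_{[-1,1]})+\cT_{\rm MV}\doteq 0,
\]
with no $\cT_r$ in sight. The formula you wrote is precisely the shape of Theorem~\ref{t.2}, which in this paper is \emph{derived from} Theorem~\ref{t.3} in Section~\ref{s3}; invoking it here is circular.

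\textbf{The unverified step.} Your second ingredient, $\cT(\widehat{M}_1)=\cT_a(M_1)+\cT_r(M_1)+c_2\log 2$, is the $\Z/2$ reflection principle. For Ray--Singer torsion this is classical, but for Bismut--Lott torsion \emph{forms} you must check that the full superconnection $C_t$ on $\widehat{M}_1$ commutes with the involution and that the even/odd eigenspaces are identified, as $\Omega(S)$-modules with the correct metrics, with $\Omega_{a/r}(Z_1,F)$. This is plausible under \eqref{e.1}--\eqref{e.3}, but it is a lemma you would have to state and prove; it is not in \cite{BL} or \cite{PZZ2}.

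\textbf{Comparison with the paper.} The paper avoids both issues. Instead of doubling, it attaches a short cylinder with a relative condition at the far end (Figure~\ref{figure1}), applies the \cite{PZZ2} Mayer--Vietoris formula \eqref{e.4}, and then uses a diffeomorphism $\phi:M_1\to M$ together with the anomaly formula to convert the relative torsion of the enlarged space back into $\cT_r(M_1)$, cf.~\eqref{e.6}--\eqref{e.8}. The cylinder torsions $\cT_{a,a}$ and $\cT_{a,r}$ are computed explicitly from one--dimensional zeta values (Theorem~\ref{t.5}). The passage from the Mayer--Vietoris torsion to $\cT_{\cH'}$ reduces to a single $\sqrt{2}$ rescaling on $H^k(Y_{[-1,1]})\cong H^k(Y)$, see \eqref{e.31}--\eqref{e.33}. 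In your scheme the analogous step is substantially heavier: after the $\Z/2$ splitting of $H^*(\widehat{Z}_1)$ and a change of basis on $H^*(Z_1)\oplus H^*(Z_1)$, you must decompose the Mayer--Vietoris torsion of the double into a trivial even part and an odd part isomorphic to $\cH'$, while tracking several independent $\log 2$ contributions coming from the $L^2$ normalizations on $H^*(\widehat{Z}_1)$, on the diagonal/antidiagonal basis, and on $H^*(Y_{[-1,1]})$.

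If you correct the first step to the genuine Mayer--Vietoris output of \cite{PZZ2} and supply a proof of the $\Z/2$ decomposition for torsion forms, the doubling strategy can be made to work; but the paper's cylinder-and-diffeomorphism argument is shorter and requires no new structural lemma.
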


The following version of the gluing formula was originally conjectured by the author in \cite{Zhu15} under the setting of Figure \ref{figure2}.

\begin{thm}\label{t.2} Under the assumptions of product structures \eqref{e.1}-\eqref{e.3},
the following identity holds,
\begin{align}\begin{aligned}\label{e.355}
\mathscr{T}(T^{H}M, g^{TZ}, h^{F})-\mathscr{T}_{\rm r}(T^{H}M_{1}&,g^{TZ_{1}}, h^{F})
-\mathscr{T}_{\rm a}(T^{H}M_{2}, g^{TZ_{2}}, h^{F})\\
&\doteq \cT_{\cH}+\frac{\log 2}{2}\rk(F)\chi(Y),
\end{aligned}\end{align}
where $\cT_{\cH}$ is the torsion forms associated with the long exact sequence \eqref{e.434}.
\end{thm}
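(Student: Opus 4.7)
\medskip

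\noindent\textbf{Proof plan for Theorem \ref{t.2}.}
The strategy is to combine the two ingredients already prepared in the paper: (i) the gluing formula of \cite{PZZ2}, which is the ``symmetric'' version expressing $\mathscr{T}(T^{H}M,g^{TZ},h^{F})$ in terms of two torsion forms on $M_{1}, M_{2}$ satisfying the \emph{same} boundary condition (together with a boundary torsion $\cT(T^{H}X,g^{TY},h^{F})$, the torsion form of a Mayer--Vietoris long exact sequence on $(Z;Z_{1},Z_{2},Y)$, and a constant multiple of $\log 2 \cdot \rk(F)\chi(Y)$), and (ii) the comparison formula \eqref{e.43} of Theorem \ref{t.3}, which switches $\mathscr{T}_{\mathrm{a}}(T^{H}M_{1},g^{TZ_{1}},h^{F})$ and $\mathscr{T}_{\mathrm{r}}(T^{H}M_{1},g^{TZ_{1}},h^{F})$ at the cost of $\mathscr{T}(T^{H}X,g^{TY},h^{F})$, the torsion form $\mathscr{T}_{\mathscr{H}'}$ of \eqref{e.44}, and an additive multiple of $\log 2 \cdot \rk(F)\chi(Y)$.

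The plan is therefore: first, quote the \cite{PZZ2} gluing formula to write $\mathscr{T}(T^{H}M,g^{TZ},h^{F})$ modulo exact forms as $\mathscr{T}_{\mathrm{a}}(T^{H}M_{1},\cdot) + \mathscr{T}_{\mathrm{a}}(T^{H}M_{2},\cdot) + \text{(boundary and Mayer--Vietoris correction)}$; second, substitute Theorem \ref{t.3} rearranged as
\[
\mathscr{T}_{\mathrm{a}}(T^{H}M_{1},g^{TZ_{1}},h^{F}) \doteq \cT_{\mathrm{r}}(T^{H}M_{1},g^{TZ_{1}},h^{F}) + \mathscr{T}_{\mathscr{H}'} + \cT(T^{H}X,g^{TY},h^{F}) - \tfrac{3}{2}\log 2 \cdot \rk(F)\chi(Y),
\]
so that the boundary torsion $\cT(T^{H}X,g^{TY},h^{F})$ cancels (up to the appropriate coefficient) against its counterpart appearing in the \cite{PZZ2} formula; third, collect the remaining terms, bringing the formula into the form \eqref{e.355} with $\cT_{\cH}$ and the constant $\tfrac{\log 2}{2}\rk(F)\chi(Y)$.

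The principal obstacle is the third step: showing that the two torsion forms of long exact sequences that survive the substitution — namely $\mathscr{T}_{\mathscr{H}'}$ from \eqref{e.44} and the Mayer--Vietoris torsion form from \cite{PZZ2} — combine modulo exact forms into $\cT_{\mathscr{H}}$ associated with the sequence \eqref{e.434}. This is a family analogue of Milnor's multiplicativity of Reidemeister torsion in a short exact sequence of (acyclic) complexes, now stated at the level of Bismut--Lott torsion forms. Concretely, the two sequences $\mathscr{H}'$ (pair $(Z_{1},Y)$) and the Mayer--Vietoris sequence for $Z=Z_{1}\cup_{Y}Z_{2}$ fit into a commutative $3\times 3$ diagram whose third row/column is $\mathscr{H}$; combining the torsion forms along this diagram, one verifies the identity modulo $d^{S}$-exact forms by the standard double complex argument used in \cite[\S 2]{BM13} and \cite{Zhu15}, together with the transgression/anomaly formulas for $L^{2}$ metrics on cohomology. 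A separate but entirely arithmetic check then confirms that the coefficients of $\log 2 \cdot \rk(F)\chi(Y)$ and $\cT(T^{H}X,g^{TY},h^{F})$ accumulated from the two substitutions combine exactly into $\tfrac{\log 2}{2}\rk(F)\chi(Y)$ and zero, respectively, yielding \eqref{e.355}.
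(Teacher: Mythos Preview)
Your plan is essentially the one the paper follows in Section~\ref{s3}: apply the \cite{PZZ2} gluing formula with absolute conditions on both pieces (equation~\eqref{e.46}), substitute the comparison formula of Theorem~\ref{t.1}/\ref{t.3} to switch one absolute torsion to a relative one (equation~\eqref{e.48}), and then prove the resulting identity among torsion forms of long exact sequences. The paper's Lemma~\ref{l.2} is precisely your ``third step'' identity $\cT_{\cH'}\doteq\cT_{\cH''}-\cT_{\cH}$, established there by a truncation--induction argument from \cite{Ma02} rather than via a $3\times 3$ diagram; either device works.

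One point your sketch glosses over: the \cite{PZZ2} formula as actually invoked in the paper is in the four-piece setting of Figure~\ref{figure3}, where $M_1,M_2$ carry extra collars and an explicit cylinder torsion $\TaaMc$ appears (computed via Theorem~\ref{t.5}). Running the arithmetic in that setting yields the constant $\log 2\cdot\rk(F)\chi(Y)$ of Theorem~\ref{t.4}, \emph{not} the $\tfrac{1}{2}\log 2\cdot\rk(F)\chi(Y)$ of Theorem~\ref{t.2}. The passage from Theorem~\ref{t.4} to Theorem~\ref{t.2} (see the Remark following Theorem~\ref{t.4}) requires the anomaly formula of \cite{Zhu16} together with a further $\tfrac{\log 2}{2}\rk(F)\chi(Y)$ correction of the type~\eqref{e.33}, coming from the rescaling of $L^2$ metrics on cohomology when the collars are removed. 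So your ``entirely arithmetic check'' will not close up to $\tfrac{\log 2}{2}$ directly in the Figure~\ref{figure2} setting without this additional step.
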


For the Ray-Singer analytic torsions, the above gluing formula was proved in \cite[(0.22)]{BM13} without any product structures of the metrics.

The whole paper is organised as follows. In Section \ref{s1}, we introduce some preliminary on the geometry over fibrations with boundaries. We also introduce the Bismut-Lott analytic torsion forms with the absolute/relative boundary conditions. In Section \ref{s2}, we establish Theorem \ref{t.3} the comparison formula of the absolute and relative torsion forms. In Section \ref{s3}, we compare the two versions of the gluing formulas and establish Theorem \ref{t.2}.

\textbf{Acknowledgments.} The author thanks M. Puchol and Y. Zhang for very useful and inspiring conversations.
This work was completed during the author's visit to the Institute of Geometry and Physics at the University of Science and Technology of China. He would like to thank IGP for hospitality.

\section{Fibrations with cylinder ends}\label{s1}
\subsection{Product structures}
Let $\pi:M\ra S$ be a smooth fibration with boundary $X:=\partial M$ and a compact manifold $Z$ as its standard fiber. We assume that the boundary $X$ of $M$ is a smooth fibration denoted by $\pi_{\partial}: X\ra S$ with fiber $Y$ such that $Y=\partial Z$.

\begin{defn}\label{d.1} For a compact manifold $X$ and a subset $I$ of $\R$, we set $X_I:=X\times I$, for example $X_{[-a,a]}=X\times [-a,a]$ ($a>0$), $X_{\R}=X\times (-\infty,+\infty)$.
\end{defn}

Let $TM$ be the tangent bundle of $M$. Let $TZ$ be the vertical subbundle of $TM$. Let $T^HM$ be the horizontal subbundle of $TM$, such that $TM=TZ\oplus T^HM$. Let $TY$ be the vertical tangent bundle of the fibration $X$, which is a subbundle of $TZ|_X$. Let $N$ be the normal bundle of $X\subset M$, i.e., $N:=TM/TX$, then by our assumption we have $TZ/TY\cong N$. In this case, $N\ra X$ is a trivial line bundle.

Let $g^{TZ}$ be a metric on $TZ$ and $g^{TY}$ be the metric on $TY$ induced by $\gTZ$. We identity $N$ with the orthogonal complement of $TY$ in $TZ$ with respect to $\gTZ$, thus we have $TZ|_X=TY\oplus N$.

Assume that $X_{[-1,2]}$ is a product neighborhood of $X\subset M$, and $\partial M$ is identified with $X_{\set{2}}$. Let $\psi: X\times [-1,2]\ra X$ be the projection on the first factor. We assume that
$T^HM$ and $\gTZ$ have product structures on $X_{[-1,2]}$, i.e.,
\begin{align}\begin{aligned}\label{e.1}
(T^HM)|_X\subset TX,\quad (T^HM)|_{X_{[-1,2]}}=\psi^*((T^HM)|_X),
\end{aligned}\end{align}
\begin{align}\begin{aligned}\label{e.2}
\gTZ|_{(x',x_m)}=\gTY(x')+dx^2_m,\quad (x',x_m)\in X\times [-1,2].
\end{aligned}\end{align}
Then $T^HM:=(T^HM)|_X$ gives a horizontal bundle of fibration $X$, such that $TX=T^HX\oplus TY$.

Let $(F,\nF)$ be a flat complex vector bundle on $M$ with a flat connection $\nF$, i.e., $(\nF)^2=0$. We trivialize $F$ along the $x_m-$direction, by using the parallel transport with respect to $\nF$, then we have $(F,\nF)|_{X_{[-1,2]}}=\phi^*\left(F|_X,\nF|_X\right)$. Under this trivialization, we assume
\begin{align}\begin{aligned}\label{e.3}
h^F|_{X_{[-1,2]}}=\psi^*(h^F|_X).
\end{aligned}\end{align}

\subsection{Bismut's superconnection}
Let $\OZF$ be the infinite-dimensional $\Z-$graded vector bundle over $S$ whose fiber is $\OZbF$ at $b\in S$. Then we have
\begin{align}\begin{aligned}\label{e.12}
\OMF=\Omega^\bullet(S,\OZF).
\end{aligned}\end{align}

 The fiberwise Riemannian volume form $\dvZ$ associated with $\gTZ$ is a section of $\Lambda^m(T^*Z)\otimes \oTZ$ over $M$, where $\oTZ$ is the orientation line bundle of $TZ$. The Hermitian metric on the infinite dimensional bundle $\OZF\ra S$ induced by $\gTZ$ and $\hF$ is defined as: for $s,\,s'\in \OZbF$, $b\in S$,
\begin{align}\begin{aligned}\label{e.13}
\iprod{s,s'}_{\hOZF}(b):=\int_{Z_b}\iprod{s,s'}_{g^{\Lambda(T^*Z)\otimes F}}(x)dv_{Z_b}.
\end{aligned}\end{align}

Let $\PTZ:TM\ra TZ$ be the natural projection. For $U\in TS$, let $U^H$ be the horizontal lift of $U$ in $T^HM$, so that $\pi_*U^H=U$.

\begin{defn}
  For $U\in TS$, the Lie derivative of $U^H$ defines a connection on $\OZF$,
\begin{align}\begin{aligned}\label{e.14}
\nOZF_Us:=L_{U^H}s,\quad \text{for} \quad s\in \Cinf{S,\OZF}.
\end{aligned}\end{align}
\end{defn}

Let $d^Z$ be the exterior differentiation twisted by $\nF$ on the fibers $Z$. For $U_1,U_2\in TS$, set
\begin{align}\begin{aligned}\label{e.15}
T(U_1,U_2)=-\PTZ[U^H_1,U^H_2]\in \Cinf{M,TZ},
\end{aligned}\end{align}
which defines a tenso $T\in \Cinf{M,\pi^*(\Lambda^2(T^*S))\otimes TZ}$. Let $i_T$ be the interior multiplication by $T$ in the vertical direction.
The exterior differentiation $d^M$ twisted by $\nF$ acting on $\OMF$ can be decomposed as:
\begin{align}\begin{aligned}\label{e.16}
d^M=d^Z+\nOZF+i_T.
\end{aligned}\end{align}

Let $\nOZFa,\,\dMa,\,\iTa,\,\dZa$ be the formal adjoints of $\nOZF,\,d^M,\,i_T,\,d^Z$ with respect to $\hOZF$ in the senses of \cite[Def. 1.6]{BL}. Then by \eqref{e.16} we have $\dMa=\dZa+\nOZFa+\iTa$.
Let $N_Z$ be the number operator on $\OZF$, i.e., it acts by multiplication by $k$ on $\Omega^k(Z,F)$.
For $t>0$, we set (cf. \cite[(3.50)]{BL})
\begin{align}\begin{aligned}\label{e.17}
C_t&:=\onehalf\left(t^{-N_Z/2}\dMa t^{N_Z/2}+t^{N_Z/2}d^M t^{-N_Z/2}\right),\\
D_t&:=\onehalf\left(t^{-N_Z/2}\dMa t^{N_Z/2}-t^{N_Z/2}d^M t^{-N_Z/2}\right),
\end{aligned}\end{align}
where $D_t$ is an odd element of $\Omega(S,\End(\OZF))$ and $C_t$ is essentially the same as the Bismut superconnection introduced by Bismut in \cite{Bismut86}.

\subsection{Absolute and relative boundary conditions}
Let $\en$ be the inward-pointing unit normal vector field on $X\subset M$, and $\enu$ be its dual vector field, then we extend $\en, \enu$ on $X_{[-1,2]}$. By the product structure \eqref{e.2}, we have
$\en=-\parxm{},\,\enu=-\dxm$ on $X_{[-1,2]}$. For $\sigma\in \OMF$, we say that $\sigma$ satisfies the \textbf{absolute boundary conditions}, if
\begin{align}\begin{aligned}\label{e.18}
(i_{\en}\sigma)|_X=(i_{\en}d^Z\sigma)|_X=0.
\end{aligned}\end{align}
We say that $\sigma$ satisfies the \textbf{relative boundary conditions}, if
\begin{align}\begin{aligned}\label{e.19}
(\enu\wedge\sigma)|_X=(\enu\wedge d^Z\sigma)|_X=0.
\end{aligned}\end{align}

Let $\Omega_{a/r}(Z,F)$ be the space of differential forms verifying the absolute/relative boundary conditions respectively. Let $H_{a/r}(Z,F)$ be the flat vector bundle of fiberwise absolute/relative cohomology groups with canonical connection $\n^{H_{a/r}(Z,F)}$. Set
\begin{align}\begin{aligned}\label{e.20}
&\chi_{a/r}(Z)=\sum_{p=0}^m(-1)^p \rk H^p_{a/r}(Z),
\quad\chi'_{a/r}(Z,F)=\sum_{p=0}^m(-1)^p p\rk H^p_{a/r}(Z,F).
\end{aligned}\end{align}
Let $\varphi:\Omega(S)\ra\Omega(S)$ be the linear map such that for all homogeneous $\omega\in \Omega(S)$, $\varphi \om=(2i\pi)^{-(\deg \om)/2}\om$. Let $f(x)=xe^{x^2}$ and $f'(x)=(1+2x^2)e^{x^2}$.  For $t>0$, we set
\begin{align}\begin{aligned}\label{e.22}
f^{\wedge}(D_t,\hOZF)_{a/r}:=\varphi\str\left[\frac{N}{2}f'(D_t)_{a/r}\right].
\end{aligned}\end{align}

The following definition of Bismut-Lott analytic torsion forms is given by \cite[Def. 3.22]{BL} without boundaries and by \cite[Def. 2.18]{Zhu15} with boundaries.

\begin{defn}\label{d.2}
The Bismut-Lott torsion forms with absolute/relative boundary conditions are defined as
\begin{align}\begin{aligned}\label{e.21}
\cT_{a/r}(T^HM,\gTZ,\hF)&=-\int^{+\infty}_0\left[f^{\wedge}(D_t,\hOZF)_{a/r}
-\frac{\chi'_{a/r}(Z,F)}{2}f'(0)\right.\\
&\quad\quad \left.-\Big(\onefour m\rk(F)\chi_{a/r}(Z)-\frac{\chi'_{a/r}(Z,F)}{2}\Big)f'(\frac{i\sqrt{t}}{2})\right]\frac{dt}{t}.
\end{aligned}\end{align}
\end{defn}

\section{Comparison formula of the absolute and relative Bismut-Lott torsion forms}\label{s2}

\subsection{A consequence of the gluing formula}

\begin{figure}
  \centering
  \includegraphics[width=10cm]{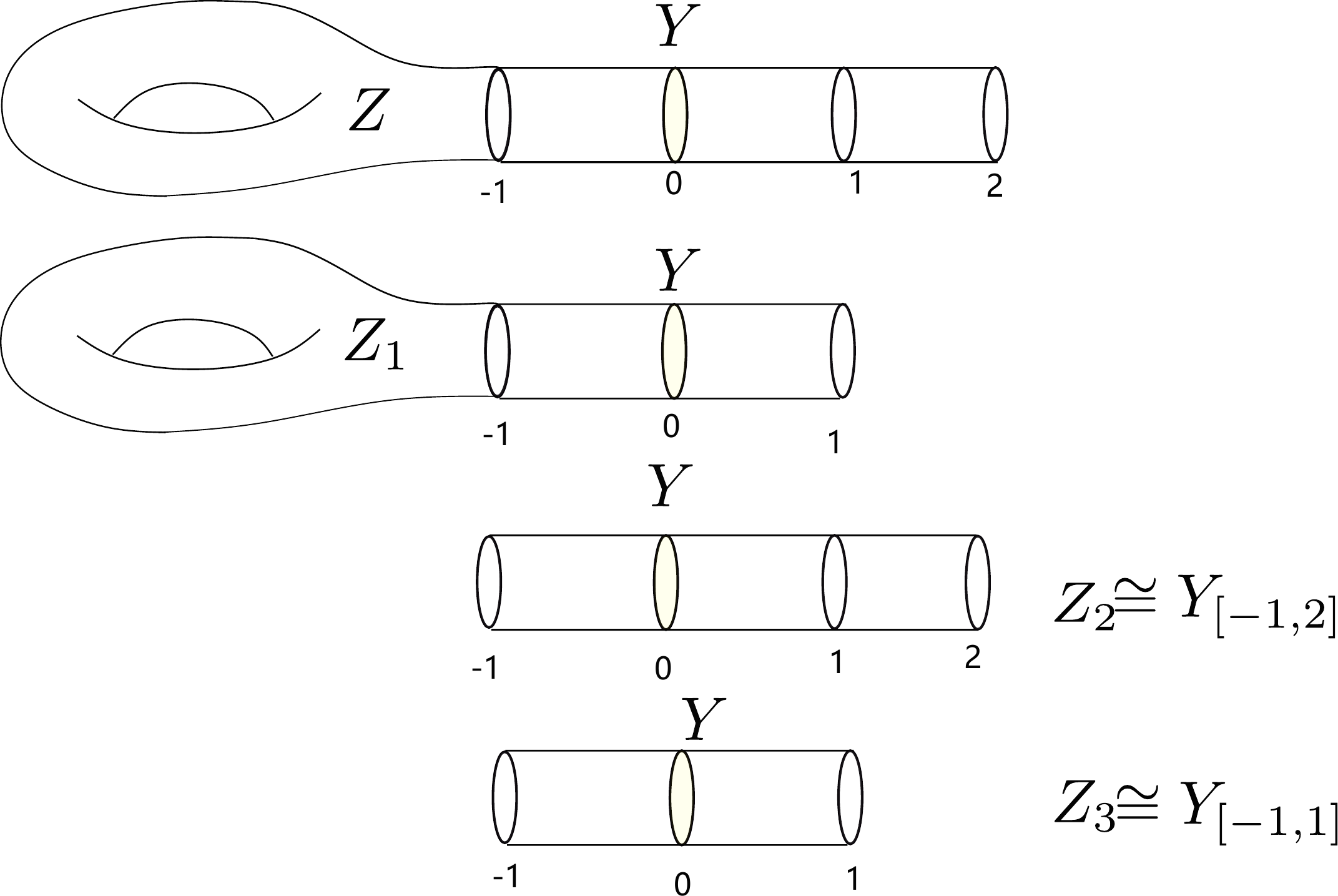}\\
\caption{Gluing relations}\label{figure1}
\end{figure}
As in Figure \ref{figure1}, let $\pi: M_1\ra S$ be a fibration obtained by cutting $X_{[1,2]}$ from $M$. Set $M_2=X\times [-1,2]$ and $M_3=X\times [-1,1]$. The standard fibers of $M_1,\,M_2,\,M_3$ are denoted respectively by $Z_1,\,Z_2\cong Y_{[-1,2]},\,Z_3\cong Y_{[-1,1]}$. The flat vector bundles and the metrics on $M_1,\,M_2,\,M_3$ are all induced from $M$ in an obvious way.

We impose the relative boundary conditions at $X_{\set{2}}\subset M$, $X_{\set{2}}\subset M_2$ and the absolute boundary conditions at $X_{\set{1}}\subset M_1$, $X_{\set{-1}}\subset M_2$, $X_{\set{\pm 1}}\subset M_3$. Let $\TMgh$, $\TMgha,\,\TMghb,\,\TMghc$ be the corresponding Bismut-Lott real analytic torsion forms associated with $M$, $M_1$, $M_2$ and $M_3$ respectively. By applying Theorem 0.1 of \cite{PZZ2} under the setting of Figure \ref{figure1}, we obtain
\begin{align}\begin{aligned}\label{e.4}
&\TMgh-\TMgha\\
&\qquad\qquad-\TMghb+\TMghc+\mathscr{T}_{\mathscr{H}}\doteq0,
\end{aligned}\end{align}
where ``$\doteq$'' means the equality holds modulo some exact differential forms and $\mathscr{T}_{\mathscr{H}}$ is the torsion form associated to the following flat exact sequence of cohomology bundles,
\begin{align}\begin{aligned}\label{e.5}
\mathscr{H}:\quad\cdots \ra H^k(Z,Y_{\set{2}};F)\ra H^k(Z_1;F)\oplus H^k(Z_2,Y_{\set{2}};F)\ra H^k(Z_3;F)\ra\cdots.
\end{aligned}\end{align}

\textbf{Remark}: Since in the proof of the gluing formula in \cite{PZZ2} all the operations are localized near the cutting hypersurface, the formula is also suitable for the setting of Figure \ref{figure1} where the total fibration $M$ has a boundary.\\

Let $\phi: (-1,1)\ra (-1,2)$ be a smooth monotonic bijective function, such that the function $(\phi'-1)$ has compact support in $(-1,1)$. In an obvious way, $\phi$ can be extended to be functions $\phi:X_{[-1,1]}\ra  X_{[-1,2]}$ and $\phi:M_1\ra M$. Then we have
\begin{align}\begin{aligned}\label{e.6}
\TMgh&\doteq\cT_r(T^HM_1,\phi^*g^{TZ},h^F),\\
\TMghb&\doteq\cT_{a,r}(T^HM_3,\phi^*g^{TZ_2},h^F).
\end{aligned}\end{align}
By \eqref{e.4}, \eqref{e.6}, we get
\begin{align}\begin{aligned}\label{e.7}
&\cT_r(T^HM_1,\phi^*g^{TZ},h^F)-\TMgha\\
&\qquad\qquad-\cT_{a,r}(T^HM_3,\phi^*g^{TZ_2},h^F)+\TMghc+\mathscr{T}_{\mathscr{H}}\doteq 0.
\end{aligned}\end{align}
By the anomaly formula of Bismut-Lott torsion forms \cite[Theorem 1.5]{Zhu16}, we have
\begin{align}\begin{aligned}\label{e.8}
&\cT_r(T^HM_1,g^{TZ_1},h^F)-\TMgha\\
&\qquad\qquad-\cT_{a,r}(T^HM_3,g^{TZ_3},h^F)+\TMghc+\mathscr{T}_{\mathscr{H}'}\doteq 0,
\end{aligned}\end{align}
where $\mathscr{T}_{\mathscr{H}'}$ is the torsion form associated to
\begin{align}\begin{aligned}\label{e.9}
\mathscr{H}':\quad\cdots \ra H^k(Z_1,Y_{\set{1}};F)\ra H^k(Z_1;F)\oplus H^k(Z_3,Y_{\set{1}};F)\ra H^k(Z_3;F)\ra\cdots.
\end{aligned}\end{align}

\subsection{Computations of $\cT_{a,a/r}(T^H\Xmoo,g^{T\Ymoo},h^F)$ over $X_{[-1,1]}$}

If $\varphi$ is a closed differential form on $Y_{[-1,1]}\cong[-1,1]\times Y$, we have the following decompositon
\begin{align}\begin{aligned}\label{e.241}
\varphi=\varphi_1(u,y)+du\wedge \varphi_2(u,y),\quad \text{where} \quad (u,y)\in [-1,1]\times Y.
\end{aligned}\end{align}
If $\varphi$ is a closed differential form, then we have
\begin{align}\begin{aligned}\label{e.242}
0&=d\varphi=(du\wedge\frac{\partial}{\partial u}+d^Y)(\varphi_1(u,y)+du\wedge \varphi_2(u,y))\\
&=du\wedge\left(\frac{\partial \varphi_1}{\partial u}(u)-d^Y\varphi_2(u)\right)+d^Y\varphi_1(u).
\end{aligned}\end{align}
By Equation \eqref{e.242}, we have
\begin{align}\begin{aligned}\label{e.243}
\varphi_1(1)-\varphi_1(-1)=d^Y\int^1_{-1}\varphi_2(u)du,\quad d^Y(\varphi_1(u))=0,\quad \text{for all} \quad u\in [-1,1].
\end{aligned}\end{align}
We see that $\set{\varphi_1(u):\,u\in[-1,1]}$ gives a family of closed differential forms in the same de Rham cohomology class on $Y$.

We \textbf{assume} that $\varphi$ satisfies the absolute boundary condition at $Y_{\set{-1}}$ and the relative boundary condition at $Y_{\set{1}}$, i.e.,
\begin{align}\begin{aligned}\label{e.244}
\big(i_{\paru}\varphi\big)|_{u=-1}=0,\quad \left(du\wedge\varphi\right)|_{u=1}=0.
\end{aligned}\end{align}

\begin{prop}\label{p.1}
  If we impose the relative boundary conditions at $Y_{\set{1}}$, then the de Rham cohomology groups $H^*(Y_{[-1,1]},Y_{\set{1}};F)$ vanish.
\end{prop}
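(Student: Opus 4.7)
The plan is to prove vanishing by constructing an explicit primitive with the correct boundary behaviour. This reflects the geometric fact that $Y_{[-1,1]}$ deformation retracts onto $Y_{\{1\}}$, so the pair $(Y_{[-1,1]}, Y_{\{1\}})$ is homotopy equivalent to $(Y_{\{1\}}, Y_{\{1\}})$, whose relative cohomology is trivial. The analytic content is to realize this chain homotopy by an operator compatible with the relative boundary condition.

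Concretely, let $\varphi \in \Omega^*(Y_{[-1,1]}, F)$ be a closed form satisfying the relative boundary condition at $u=1$. Using the trivialization of $F$ along the $u$-direction by parallel transport with respect to $\nF$ (as in \eqref{e.3}), I would decompose $\varphi = \varphi_1(u, y) + du \wedge \varphi_2(u, y)$ following \eqref{e.241}. The relative condition $(du \wedge \varphi)|_{u=1} = 0$ forces $\varphi_1(1, y) = 0$ as a form on $Y$, while the closedness of $\varphi$, rewritten using $d = d^Y + du \wedge \partial_u$, yields $d^Y \varphi_1(u) = 0$ for all $u$ and $\partial_u \varphi_1 = d^Y \varphi_2$, as already observed in \eqref{e.242}--\eqref{e.243}.

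Then I would define the candidate primitive $\psi(u, y) := -\int_u^1 \varphi_2(s, y)\, ds$ and verify directly that $d\psi = \varphi$: integrating $\partial_u \varphi_1 = d^Y \varphi_2$ from $u$ to $1$ and using $\varphi_1(1)=0$ gives $\varphi_1(u) = -d^Y\int_u^1 \varphi_2\, ds$, which matches the $Y$-component of $d\psi$, while the $du$-component of $d\psi$ equals $du \wedge \varphi_2(u)$ by the fundamental theorem of calculus. Moreover $\psi(1, y) = 0$, so $(du \wedge \psi)|_{u=1} = 0$, and the second clause of \eqref{e.19} reads $(du \wedge d^Z \psi)|_{u=1} = (du \wedge \varphi)|_{u=1} = 0$, which holds by hypothesis. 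Hence $[\varphi] = 0$ in $H^*(Y_{[-1,1]}, Y_{\{1\}}; F)$, and since $\varphi$ was arbitrary the cohomology vanishes. There is no substantial obstacle here; the only care required is sign bookkeeping in the splitting $d = d^Y + du \wedge \partial_u$ and in verifying that $\nF$ becomes $d$ in the $u$-direction under the chosen trivialization, both of which are routine consequences of \eqref{e.3}.
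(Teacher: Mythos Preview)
Your proof is correct and is essentially the paper's own argument: the paper likewise decomposes $\varphi=\varphi_1+du\wedge\varphi_2$, uses the relative condition to get $\varphi_1(1)=0$, and sets $\phi(u,y)=\int_1^u\varphi_2(v,y)\,dv$ (your $\psi$) to exhibit $\varphi=d\phi$ with the required boundary behaviour. The only minor addition in the paper is that it also notes $\phi$ satisfies the absolute condition at $Y_{\{-1\}}$, which you omit; this is harmless for the de Rham cohomology of the pair as stated, though it matters if one wants the primitive to lie in $\Omega_{a,r}$.
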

\begin{proof}
  By \eqref{e.242}-\eqref{e.244}, we have
\begin{align}\begin{aligned}\label{e.245}
\varphi_1(1)=0,\quad \varphi_1(1)-\varphi_1(u)=d^Y\int^1_u\varphi_2(v)dv.
\end{aligned}\end{align}
Hence we get
\begin{align}\begin{aligned}\label{e.246}
\varphi_1(u)=d^Y\int^u_1\varphi_2(v)dv.
\end{aligned}\end{align}
Then by \eqref{e.241}, we have
\begin{align}\begin{aligned}\label{e.247}
\varphi&=d^Y\int^u_1\varphi_2(v)dv+du\wedge\varphi_2(u)\\
&=(d^Y+du\wedge\paru)\int^u_1\varphi_2(v)dv=d\int^u_1\varphi_2(v)dv.
\end{aligned}\end{align}
Set $\phi(u,y):=\int^u_1\varphi_2(v,y)dv$. It is easy to verify that $\phi$ satisfies the absolute boundary condition at $Y_{-1}$ and the relative boundary condition at $Y_1$. By \eqref{e.247}, we get $\varphi=d\phi$, which means that all the closed forms are exact.
\end{proof}

By Definition \ref{d.2} and Proposition \ref{p.1}, we have
\begin{align}\begin{aligned}\label{e.10}
\cT_{a,r}(T^HM_3,g^{TZ_3},h^F):=-\int_0^{+\infty}f^{\wedge}(D_t,h^{\Omega^\bullet(Y_{[-1,1]},F)})_{a,r}\frac{dt}{t},
\end{aligned}\end{align}
and
\begin{align}\begin{aligned}\label{e.11}
\cT_{a,a}(T^HM_3,g^{TZ_3},h^F):=-\int_0^{+\infty}&\left[f^{\wedge}(D_t,h^{\Omega^\bullet(Y_{[-1,1]},F)})_{a,a}-
\frac{\chi'(Y,F)}{2}f'(0)\right.\\
&\left.-\left(\onefour m\rk(F)\chi(Y)-\frac{\chi'(Y,F)}{2}\right)f'(\frac{i\sqrt{t}}{2})\right]\frac{dt}{t}.
\end{aligned}\end{align}
For Equation \eqref{e.11}, we have used the fact that $H^{k}(Y_{[-1,1]};F)\cong H^{k}(Y;F)$ for $0\leq k\leq m-1$.

Now we try to calculate $\cT_{a,a/r}(T^H\Xmoo,g^{T\Ymoo},h^F)$ explicitly.

Let $\ear{t}{x_m}{x'_m}$ be the heat kernel on $[-1,1]$ with the absolute boundary condition at $-1$ and the relative boundary condition at $1$. Let $\eaa{t}{x_m}{x'_m}$ be the heat kernel $-\frac{\partial^2}{\partial x^2_m}$ on $[-1,1]$ with the absolute boundary conditions at $-1,\, 1$. Then we have
\begin{align}\begin{aligned}\label{e.23}
&\eaa{t}{x_m}{x'_m}=\eDD{t}{x_m}{x'_m}\dxm \otimes \dxmu+\eNN{t}{x_m}{x'_m}1 \otimes 1^*,\\
&\ear{t}{x_m}{x'_m}=\eDN{t}{x_m}{x'_m}\dxm \otimes \dxmu+\eND{t}{x_m}{x'_m}1 \otimes 1^*.
\end{aligned}\end{align}
Here the notations $\eDN{t}{\cdot}{\cdot},\cdots $ etc. mean that we impose the Dirichlet boundary condition at $-1$ and
the Neumann boundary condition at $1$.
Let $D^2_t$ be the operator on $X_{[-1,1]}$. Let $\tD_t$ be the operator on the boundary fibration $X$, then by the product structures \eqref{e.1}-\eqref{e.3} we have
\begin{align}\begin{aligned}\label{e.24}
D^2_t=\tD^2_t+\frac{t}{4}\frac{\partial^2}{\partial x^2_m}.
\end{aligned}\end{align}
For their heat kernels, we have the following relations:
\begin{align}\begin{aligned}\label{e.25}
e^{D^2_t}_{a,a}(x,x')=e^{\tD^2_t}(y,y')\otimes \eaa{\frac{t}{4}}{x_m}{x'_m},\quad
e^{D^2_t}_{a,r}(x,x')=e^{\tD^2_t}(y,y')\otimes \ear{\frac{t}{4}}{x_m}{x'_m}.
\end{aligned}\end{align}
By \eqref{e.22}, we have
\begin{align}\begin{aligned}\label{e.26}
&f^{\wedge}(D_t,h^{\Omega^\bullet(Y_{[-1,1]},F)})_{a,a}=\varphi\str\left[\frac{N}{2}f'(D_t)_{a,a}\right]\\
&=\varphi\int_{Y_{[-1,1]}}\tr\left[(-1)^N\frac{N}{2}(1+2D^2_t)e^{D^2_t}_{a,a}(x,x)\right]dv_x\\
&=\frac{\varphi}{2}\int_{Y_{[-1,1]}}\tr\Big[(-1)^N N\big(1+2\tD^2_t+\frac{t}{2}\frac{\partial^2}{\partial x^2_m}\big)e^{\tD^2_t}(y,y)\otimes \eaa{\frac{t}{4}}{x_m}{x_m}\Big]dv_x\\
&=\frac{\varphi}{2}\int_{Y_{[-1,1]}}\tr\Big[(-1)^N N\big(1+2\tD^2_t\big)e^{\tD^2_t}(y,y)\otimes \eaa{\frac{t}{4}}{x_m}{x_m}\Big]dv_x\\
&\qquad+\frac{\varphi}{2}\int_{Y_{[-1,1]}}\tr\Big[(-1)^N Ne^{\tD^2_t}(y,y)\otimes \frac{t}{2}\frac{\partial^2}{\partial x^2_m}\eaa{\frac{t}{4}}{x_m}{x_m}\Big]dv_x\\
&=-\frac{\varphi}{2}\int_{Y_{[-1,1]}}\tr\Big[(-1)^{N_Y} (N_Y+1)f'(\tD_t)(y,y)\otimes \eDD{\frac{t}{4}}{x_m}{x_m}\Big]dv_x\\
&+\frac{\varphi}{2}\int_{Y_{[-1,1]}}\tr\Big[(-1)^{N_Y} N_Yf'(\tD_t)(y,y)\otimes \eNN{\frac{t}{4}}{x_m}{x_m}\Big]dv_x\\
&\qquad-\frac{\varphi}{2}\int_{Y_{[-1,1]}}\tr\Big[(-1)^{N_Y} (N_Y+1)e^{\tD^2_t}(y,y)\otimes \frac{t}{2}\frac{\partial^2}{\partial x^2_m}\eDD{\frac{t}{4}}{x_m}{x_m}\Big]dv_x\\
&\qquad+\frac{\varphi}{2}\int_{Y_{[-1,1]}}\tr\Big[(-1)^{N_Y} N_Ye^{\tD^2_t}(y,y)\otimes \frac{t}{2}\frac{\partial^2}{\partial x^2_m}\eNN{\frac{t}{4}}{x_m}{x_m}\Big]dv_x.
\end{aligned}\end{align}
By the Mckean-Singer formula (cf. \cite[Theorem 3.50]{BGV92}), we have
\begin{align}\begin{aligned}\label{e.27}
1=\chi_{a,a}([-1,1])=\int_{-1}^1\eNN{\frac{t}{4}}{x_m}{x_m}\dxm-\int_{-1}^1\eDD{\frac{t}{4}}{x_m}{x_m}\dxm.
\end{aligned}\end{align}
By \cite[Theorem 3.15]{BL}, \eqref{e.26} and \eqref{e.27}, we have
\begin{align}\begin{aligned}\label{e.28}
f^{\wedge}(D_t,h^{\Omega^\bullet(Y_{[-1,1]},F)})_{a,a}=&\varphi\int_{Y}\str\Big[ \frac{N_Y}{2}f'(\tD_t)(y,y)\Big]dv_y\cdot\chi_{a,a}([-1,1])\\
&\quad-\frac{\varphi}{2}\int_{Y}\str\Big[ f'(\tD_t)(y,y)\Big]dv_y\cdot\int_{-1}^1\eDD{\frac{t}{4}}{x_m}{x_m}\dxm\\
&\quad-\frac{\varphi}{2}\int_{Y}\str\left[e^{\tD^2_t}(y,y)\right]dv_y\cdot\int_{-1}^1\frac{t}{2}\frac{\partial^2}{\partial x^2_m}\eDD{\frac{t}{4}}{x_m}{x_m}\dxm\\
&=f^{\wedge}(\tD_t,h^{\Omega^\bullet(Y,F)})
-\frac{\rk(F)\chi(Y)}{2}\cdot\tr\big[(1+2t\frac{\partial}{\partial t})e^{\frac{t}{4}\frac{\partial^2}{\partial x^2_m}}_{D,D}\big].
\end{aligned}\end{align}
In the same way, one can prove
\begin{align}\begin{aligned}\label{e.29}
f^{\wedge}(D_t,h^{\Omega^\bullet(Y_{[-1,1]},F)})_{a,r}&=
-\frac{\rk(F)\chi(Y)}{2}\cdot\tr\big[(1+2t\frac{\partial}{\partial t})e^{\frac{t}{4}\frac{\partial^2}{\partial x^2_m}}_{D,N}\big].
\end{aligned}\end{align}

\begin{lemma}\label{l.1}
  We have
  \begin{align}\begin{aligned}\label{e.35}
&\int_0^\infty\tr\Big[ (1+2t\frac{\partial}{\partial t})e^{\frac{t}{4}\frac{\partial^2}{\partial x^2_m}}_{D,D}+\onehalf f'(\frac{i\sqrt{t}}{2})\Big]\frac{dt}{t}=-4\log 2.\\
&\int_0^\infty\tr\Big[(1+2t\frac{\partial}{\partial t})e^{\frac{t}{4}\frac{\partial^2}{\partial x^2_m}}_{D,N}\Big]\frac{dt}{t}=-2\log 2.
\end{aligned}\end{align}
\end{lemma}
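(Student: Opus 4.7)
The plan is to compute both integrals by Mellin transform / zeta function regularization, reducing to the classical values $\zeta(0)=-\frac{1}{2}$ and $\zeta'(0)=-\frac{1}{2}\log(2\pi)$ of the Riemann zeta function. The key preliminary observation is
\[
\tfrac{1}{2} f'(i\sqrt{t}/2)=\tfrac{1}{2}(1-t/2)e^{-t/4}=(1+2t\partial_t)\bigl(\tfrac{1}{2} e^{-t/4}\bigr),
\]
so each integrand has the form $(1+2t\partial_t)F(t)$, and an integration by parts then gives the Mellin identity $\int_0^{\infty}(1+2t\partial_t)F(t)\,t^{z-1}\,dt=(1-2z)\,\mathcal M[F](z)$ valid for $\Re z$ sufficiently large (and extended by analytic continuation).

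Next I expand $\mathcal M[F](z)$ using the eigenvalues of $-\partial^2/\partial x^2_m$ on $[-1,1]$. The Dirichlet--Dirichlet eigenvalues are $(n\pi/2)^2$, $n\ge 1$, whence
\[
\mathcal M\bigl[\tr\,e^{\frac{t}{4}\frac{\partial^2}{\partial x^2_m}}_{D,D}\bigr](z)=(16/\pi^2)^z\,\Gamma(z)\,\zeta(2z),
\]
and the counterterm $\tfrac{1}{2}e^{-t/4}$ contributes $\tfrac{1}{2}\cdot 4^z\Gamma(z)$. The Dirichlet--Neumann eigenvalues are $((2k+1)\pi/4)^2$, $k\ge 0$, giving
\[
\mathcal M\bigl[\tr\,e^{\frac{t}{4}\frac{\partial^2}{\partial x^2_m}}_{D,N}\bigr](z)=(64/\pi^2)^z\,(1-4^{-z})\,\Gamma(z)\,\zeta(2z).
\]

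In each case the simple pole of $\Gamma(z)$ at $z=0$ is cancelled by a zero at $z=0$ of the accompanying non-$\Gamma$ factor: $\zeta(0)+\tfrac{1}{2}=0$ for the first identity, and $1-4^{-z}$ vanishes at $z=0$ for the second. The value of the integral is therefore the derivative at $z=0$ of this non-$\Gamma$ factor (multiplied by $(1-2z)$ and, in the DN case, $4^z$, both of which evaluate to $1$ at $z=0$). A direct computation using $\zeta(0)$ and $\zeta'(0)$, in which the factors of $\log\pi$ and of the Euler--Mascheroni constant $\gamma$ from $\Gamma(z)=\tfrac{1}{z}-\gamma+O(z)$ cancel cleanly, produces the asserted multiples of $\log 2$. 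The only real work is bookkeeping the Laurent expansions to the correct order, so that the correct constant term is extracted in the $z\to 0$ limit; no conceptual obstacle arises.
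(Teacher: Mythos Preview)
Your proposal is correct and is essentially the same argument as the paper's: both reduce the integrals to the derivative at $0$ of the spectral zeta function of $-\partial_{x_m}^2$ on $[-1,1]$ with the appropriate boundary conditions, and then evaluate using the eigenvalues $(n\pi/2)^2$ (DD) and $((2k+1)\pi/4)^2$ (DN) together with $\zeta(0)=-\tfrac12$ and $\zeta'(0)=-\tfrac12\log(2\pi)$. The only cosmetic differences are that you make the identity $\tfrac12 f'(i\sqrt{t}/2)=(1+2t\partial_t)(\tfrac12 e^{-t/4})$ and the Mellin relation $\int_0^\infty(1+2t\partial_t)F(t)\,t^{z-1}\,dt=(1-2z)\mathcal M[F](z)$ explicit (the paper instead cites \cite[Theorem~3.29]{BL} for this integration-by-parts step), and you write the DN sum as $(1-4^{-z})\zeta(2z)$ whereas the paper packages it as $\tilde\zeta(2s)=\sum_{k\ge1}(k-\tfrac12)^{-2s}$.
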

\begin{proof}
 The zeta functions associated with Laplacians are defined as the Mellin transform of the trace of heat operators \cite[\S 9.6]{BGV92}, i.e.,
\begin{align}\begin{aligned}\label{e.39}
\zeta_{D,D/N}(s):=\frac{1}{\G(s)}\intzi t^s\tr[e^{t\frac{\partial^2}{\partial x^2_m}}_{D,D/N}]\frac{dt}{t}.
\end{aligned}\end{align}

Let $\zeta(s)$ be the Riemann zeta function and $\tzeta(s):=\sum_{k=1}^{\infty}(k-\onehalf)^{-s}$. Then by \cite[\S 9]{Weil99}, one has
\begin{align}\begin{aligned}\label{e.40}
\zeta(0)=-\onehalf,\quad \zeta'(0)=-\onehalf \log(2\pi),\quad \tzeta(0)=0,\quad \tzeta'(0)=-\onehalf \log 2.
\end{aligned}\end{align}
The zeta functions in \eqref{e.39} can also expressed as
\begin{align}\begin{aligned}\label{e.41}
\zDD(s)&=2\left(\frac{2}{\pi}\right)^{2s}\sum_{k=1}^\infty\frac{1}{k^{2s}}
=2\left(\frac{2}{\pi}\right)^{2s}\zeta(2s),\\
\zDN(s)&=2\left(\frac{2}{\pi}\right)^{2s}\sum_{k=1}^\infty\frac{1}{(k-\onehalf)^{2s}}=
2\left(\frac{2}{\pi}\right)^{2s}\tzeta(2s).
\end{aligned}\end{align}
Then we have
\begin{align}\begin{aligned}\label{e.65}
\zDD'(0)&=-4\log 2,\quad \zDN'(0)=-2\log 2.
\end{aligned}\end{align}

Note that $f'(x)=(1+2x^2)e^{x^2}$. Proceeding as in \cite[Theorem 3.29]{BL},
by integration by parts we get that the left hand side of \eqref{e.35} is equal to
\begin{align}\begin{aligned}\label{e.38}
&\int_0^\infty\tr\Big[ (1+2t\frac{\partial}{\partial t})e^{\frac{t}{4}\frac{\partial^2}{\partial x^2_m}}_{D,D}+\onehalf f'(\frac{i\sqrt{t}}{2})\Big]\frac{dt}{t}=\zpDD(0).\\
&\int_0^\infty\tr\Big[(1+2t\frac{\partial}{\partial t})e^{\frac{t}{4}\frac{\partial^2}{\partial x^2_m}}_{D,N}\Big]\frac{dt}{t}=\zpDN(0).
\end{aligned}\end{align}
Then Equation \eqref{e.35} follows from \eqref{e.65} and \eqref{e.38}. The proof is completed.
\end{proof}

\begin{thm}\label{t.5} We have
\begin{align}\begin{aligned}\label{e.64}
\cT_{a,a}(T^H\Xmoo,g^{T\Ymoo},h^F)&=\cTX-2\log 2\rk(F)\chi(Y),\\
\cT_{a,r}(T^H\Xmoo,g^{T\Ymoo},h^F)&=-\log 2\rk(F)\chi(Y).
\end{aligned}\end{align}
\end{thm}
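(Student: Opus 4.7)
\textbf{Proof proposal for Theorem \ref{t.5}.} The plan is purely a bookkeeping exercise: substitute the pointwise identities \eqref{e.28}--\eqref{e.29} into the definitions \eqref{e.10}--\eqref{e.11}, match the result against the definition \eqref{e.21} of $\cTX$ (with fiber $Y$ of dimension $m-1$), and apply Lemma \ref{l.1} to evaluate the residual one-dimensional integral.

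For the absolute-relative piece, the cohomology vanishing of Proposition \ref{p.1} has already been used to simplify the definition to \eqref{e.10}, so no renormalizing terms survive. Substituting \eqref{e.29} directly,
\begin{align*}
\cT_{a,r}(T^H\Xmoo,g^{T\Ymoo},h^F)
&=-\intzi f^{\wedge}(D_t,h^{\Omega^\bullet(Y_{[-1,1]},F)})_{a,r}\,\frac{dt}{t}\\
&=\frac{\rk(F)\chi(Y)}{2}\intzi \tr\!\Big[(1+2t\tfrac{\partial}{\partial t})e^{\frac{t}{4}\frac{\partial^2}{\partial x^2_m}}_{D,N}\Big]\frac{dt}{t},
\end{align*}
and the second identity of Lemma \ref{l.1} turns this into $-\log 2\,\rk(F)\chi(Y)$, which is exactly the claimed second line of \eqref{e.64}.

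For the absolute-absolute piece, I would be careful about the dimension shift: the fiber $Y_{[-1,1]}$ of $M_3$ has dimension $m$ whereas the fiber $Y$ of $X\to S$ has dimension $m-1$, and $\chi(Y_{[-1,1]})=\chi(Y)$, $\chi'(Y_{[-1,1]},F)=\chi'(Y,F)$ under the natural identification of cohomology groups. Writing out \eqref{e.11} and subtracting the definition \eqref{e.21} of $\cTX$ (with $m$ replaced by $m-1$), the renormalizing $f'(0)$ and $\chi'(Y,F)/2$ contributions cancel, leaving
\begin{align*}
\cT_{a,a}(T^HM_3,\cdot)-\cTX
&=-\intzi\!\Big[f^{\wedge}(D_t)_{a,a}-f^{\wedge}(\tD_t)-\tfrac{1}{4}\rk(F)\chi(Y)f'\!\big(\tfrac{i\sqrt{t}}{2}\big)\Big]\frac{dt}{t}.
\end{align*}
Plugging in \eqref{e.28}, the $f^{\wedge}(\tD_t)$ terms cancel and the integrand becomes $\frac{\rk(F)\chi(Y)}{2}\bigl[\tr[(1+2t\partial_t)e^{\frac{t}{4}\partial^2_{x_m}}_{D,D}]+\frac12 f'(i\sqrt t/2)\bigr]$. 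The first identity of Lemma \ref{l.1} then evaluates the integral to $-4\log 2$, giving $-2\log 2\,\rk(F)\chi(Y)$, which proves the first line of \eqref{e.64}.

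The only non-routine point to watch is the dimension mismatch described above; if the $\frac{1}{4}m\,\rk(F)\chi(Y)$ term in \eqref{e.11} is miscounted, the extra $-\frac{1}{4}\rk(F)\chi(Y)f'(i\sqrt{t}/2)$ that I pair with the $D,D$-trace to form exactly the combination handled by Lemma \ref{l.1} will not appear, and the reduction to $\zpDD(0)$ will fail. Other than this careful accounting, the proof is a direct substitution.
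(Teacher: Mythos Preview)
Your proposal is correct and follows essentially the same route as the paper: substitute \eqref{e.28}--\eqref{e.29} into the definitions \eqref{e.10}--\eqref{e.11} and invoke Lemma~\ref{l.1} to evaluate the residual one-dimensional integrals. If anything, your discussion of the $m$ versus $m-1$ dimension shift in the renormalizing term is more explicit than the paper's own write-up.
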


\begin{proof}
By Lemma \ref{l.1}, \eqref{e.10}, \eqref{e.11}, \eqref{e.28} and \eqref{e.29}, we have
\begin{align}\begin{aligned}\label{e.66}
\cT_{a,r}(T^HM_3,g^{TZ_3},h^F)&=\frac{\rk(F)\chi(Y)}{2}\int_0^{+\infty}\cdot\tr\big[f'(\frac{t}{4}\frac{\partial^2}{\partial x^2_m})_{D,N}\big]\frac{dt}{t}\\
&=\frac{\rk(F)\chi(Y)}{2}\cdot (-2\log 2)\\
&=-\log2\rk(F)\chi(Y),
\end{aligned}\end{align}
and
\begin{align}\begin{aligned}\label{e.67}
 \cT_{a,a}(T^HM_3,g^{TZ_3},h^F)
&=\cTX\\
&-\rk(F)\chi(Y)\int_0^{+\infty}\left[-\frac{1}{2}\cdot\tr\big[(1+2t\frac{\partial}{\partial t})e^{\frac{t}{4}\frac{\partial^2}{\partial x^2_m}}_{D,D}\big]
-\onefour f'(\frac{i\sqrt{t}}{2})\right]\frac{dt}{t}\\
&=\cTX+\onehalf\rk(F)\chi(Y)\zDD'(0)\\
&=\cTX-2\log2\rk(F)\chi(Y).
\end{aligned}\end{align}
The proof is completed.
\end{proof}

\textbf{Remark:}\begin{enumerate}
                  \item Moreover, for any $l>0$, we can prove that
\begin{align}\begin{aligned}\label{e.68}
\cT_{a,r}(T^HX_{[-l,l]},g^{TY_{[-l,l]}},h^F)&=-\log 2\rk(F)\chi(Y)\\
\cT_{a,a}(T^HX_{[-l,l]},g^{TY_{[-l,l]}},h^F)&=\cTX-2(\log 2-\frac{\log l}{4})\rk(F)\chi(Y).
\end{aligned}\end{align}
Equation \eqref{e.68} shows that the analytic torsion forms on $X_{[-l,l]}$ with absolute/relative boundary conditions at $\mp 1$ respectively is independent of the length of the cylinder.
                  \item Essentially, Theorem \ref{t.5} can be obtained by the product formula of Bismut-Lott torsion forms \cite[Proposition 3.28]{BL}.
                \end{enumerate}

Then by \eqref{e.8}, \eqref{e.10}, \eqref{e.11}, \eqref{e.28}, \eqref{e.29} and \eqref{e.64}, we have
\begin{align}\begin{aligned}\label{e.30}
&\cT_r(T^HM_1,g^{TZ_1},h^F)-\TMgha\\
&\doteq\cT_{a,r}(T^HM_3,g^{TZ_3},h^F)-\TMghc-\mathscr{T}_{\mathscr{H}'}\\
&\doteq -\cT(T^HX,g^{TY},h^F)-\mathscr{T}_{\mathscr{H}'}+\log 2\rk(F)\chi(Y).
\end{aligned}\end{align}
By Proposition \ref{p.1} and \eqref{e.9}, we have the following commutative diagram of flat vector bundles:
\begin{align}\begin{aligned}\label{e.31}
\xymatrix{
  \cH':\quad\cdots \ar[r]     & H^k(Z_1,Y_{\set{1}};F) \ar[d]^{\Id}\ar[r]& H^k(Z_1;F)\ar[d]^{\Id}\ar[r] &H^k(Y_{[-1,1]};F) \ar[d]^{\alpha_k}\ar[r]&\cdots\\
  \cH'':\quad\cdots \ar[r]       & H^k(Z_1,Y_{\set{1}};F) \ar[r] & H^k(Z_1;F) \ar[r] &H^k(Y;F)\ar[r]&\cdots,}
\end{aligned}\end{align}
where the vertical map $\alpha_k$ is given by $\alpha_k(\phi)=\sqrt{2}\phi$ for $\phi$ being a harmonic form of degree $k$ on $Y$. Note that the maps $\alpha_k$ are isometric with respect to the $L^2-$metrics induced by Hodge theorem.

By \cite[Lemma 2.8]{Zhu15}, the torsion forms associated with $H^k(Y_{[-1,1]};F)\overset{\ak}\lra H^k(Y;F)$ is given by
\begin{align}\begin{aligned}\label{e.32}
\cT\left[0\ra H^k(Y_{[-1,1]};F)\overset{\ak}\lra H^k(Y;F)\ra 0\right]=-\frac{\log 2}{2}\rk(H^k(Y;F)).
\end{aligned}\end{align}
By \cite[Theorem A1.4]{BL}, \eqref{e.31} and \eqref{e.32}, we have
\begin{align}\begin{aligned}\label{e.33}
\cT_{\cH'}-\cT_{\cH''}&\doteq\sum_{k=0}^{m-1}(-1)^k \cT\left[0\ra H^k(Y_{[-1,1]};F)\overset{\ak}\lra H^k(Y;F)\ra 0\right]\\
&\doteq-\frac{\log 2}{2}\sum_{k=0}^{m-1}(-1)^k \rk(H^k(Y;F))\\
&\doteq-\frac{\log 2}{2}\rk(F)\chi(Y).
\end{aligned}\end{align}
By \eqref{e.30} and \eqref{e.33}, we get
\begin{align}\begin{aligned}\label{e.34}
&\cT_r(T^HM_1,g^{TZ_1},h^F)-\TMgha +\cT(T^HX,g^{TY},h^F)+\mathscr{T}_{\mathscr{H}''}\\
&\doteq \log 2\rk(F)\chi(Y)+\frac{\log 2}{2}\rk(F)\chi(Y)=\frac{3}{2}\log 2\rk(F)\chi(Y).
\end{aligned}\end{align}

By \eqref{e.34}, we finally get the comparison formula of the absolute and
relative Bismut-Lott analytic torsion forms.

\begin{thm}\label{t.1}Under the assumptions of product structures \eqref{e.1}-\eqref{e.3}, we have
  \begin{align}\begin{aligned}\label{e.42}
&\cT_r(T^HM_1,g^{TZ_1},h^F)-\TMgha \\
&\qquad\qquad\doteq-\cT(T^HX,g^{TY},h^F)-\mathscr{T}_{\mathscr{H}''}+
 \frac{3}{2}\log 2\rk(F)\chi(Y),
\end{aligned}\end{align}
where $\mathscr{T}_{\mathscr{H}''}$ is the torsion form associated with the second line in the diagram \eqref{e.31}.
\end{thm}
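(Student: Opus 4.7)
The plan is to deduce Theorem \ref{t.1} as a direct consequence of the gluing formula \cite{PZZ2}, combined with an explicit computation of the Bismut--Lott torsion form on the cylinder fibration $X_{[-1,1]}$. The starting point is to apply the gluing formula of \cite{PZZ2} to the decomposition depicted in Figure \ref{figure1}, cutting $M$ along $X_{\set{1}}$ and introducing $M_2 \cong X_{[-1,2]}$ and $M_3 \cong X_{[-1,1]}$ as overlap pieces. This produces identity \eqref{e.4} relating $\TMgh$, $\TMgha$, $\TMghb$, $\TMghc$ and the torsion $\cT_{\cH}$ of a Mayer--Vietoris exact sequence. One then eliminates the artificial reparametrization $\phi$ via the anomaly formula \cite[Theorem 1.5]{Zhu16}, obtaining the cleaner identity \eqref{e.8} involving $\cT_{\cH'}$ associated with the sequence \eqref{e.9}.

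The analytic heart of the argument is the explicit evaluation of $\cT_{a,a}(T^H\Xmoo,g^{T\Ymoo},h^F)$ and $\cT_{a,r}(T^H\Xmoo,g^{T\Ymoo},h^F)$. For this I would first invoke Proposition \ref{p.1} to conclude that the relative cohomology $H^*(Y_{[-1,1]},Y_{\set{1}};F)$ vanishes, which trivializes the normalization terms in Definition \ref{d.2} for the $(a,r)$ case. Next, exploiting the product structures \eqref{e.1}--\eqref{e.3}, the operator $D_t^2$ splits as $\tD_t^2 + \tfrac{t}{4}\partial^2/\partial x_m^2$, so the heat kernel factors across $Y$ and $[-1,1]$ with appropriate boundary conditions. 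Carefully tracking the number operator decomposition $N = N_Y + N_{\mathfrak{n}}$ and expanding $f^\wedge(D_t,h^{\Omega^\bullet})_{a,a/r}$, one collects four terms. Using the McKean--Singer identity $\chi_{a,a}([-1,1]) = 1$, two of these terms regroup into $f^\wedge(\tD_t,h^{\Omega^\bullet(Y,F)})$, while the remaining Dirichlet/Neumann interval traces integrate via \cite[Theorem 3.29]{BL}-style integration by parts to the zeta derivatives $\zpDD(0)$ and $\zpDN(0)$. The explicit eigenvalue expansions \eqref{e.41} and special values \eqref{e.40} of the Riemann zeta yield $\zpDD(0)=-4\log 2$ and $\zpDN(0)=-2\log 2$, leading to Theorem \ref{t.5}.

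Substituting Theorem \ref{t.5} into \eqref{e.8} gives the preliminary identity \eqref{e.30}, which expresses the desired difference modulo $\cT_{\cH'}$ and an explicit multiple of $\log 2 \, \rk(F)\chi(Y)$. The last step is to replace $\cT_{\cH'}$, built from $H^\bullet(Y_{[-1,1]};F)$, by $\cT_{\cH''}$, built from $H^\bullet(Y;F)$. I would set up the commutative diagram \eqref{e.31} whose vertical arrows are the identity except on the $Y$-factor, where the natural restriction from $Y_{[-1,1]}$ to $Y$ scales the $L^2$-normalized harmonic representatives by $\sqrt{2}$ (since the interval has length $2$). Applying \cite[Theorem A1.4]{BL} reduces $\cT_{\cH'} - \cT_{\cH''}$ to an alternating sum of torsions of two-term complexes $0 \to H^k \overset{\sqrt{2}}{\to} H^k \to 0$, each contributing $-\tfrac{\log 2}{2}\rk(H^k(Y;F))$ by \cite[Lemma 2.8]{Zhu15}. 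Summing over $k$ produces $-\tfrac{\log 2}{2}\rk(F)\chi(Y)$, and combining with \eqref{e.30} gives the coefficient $\log 2 + \tfrac{\log 2}{2} = \tfrac{3}{2}\log 2$ appearing in \eqref{e.42}.

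The main obstacle I anticipate is the bookkeeping in the heat-kernel computation \eqref{e.26}--\eqref{e.28}: one must correctly identify which of the four Dirichlet/Neumann combinations survive after applying the number operator and supertrace, and recognize that the $\frac{t}{2}\partial_{x_m}^2$ piece coming from $2D_t^2$ combines with the time-derivative of the interval heat kernel to yield exactly the integrand $(1 + 2t\partial_t)e^{(t/4)\partial^2_{x_m}}$ whose Mellin transform is $\zpDD$ or $\zpDN$. Once this identification is made, the zeta-function computation is classical, the algebraic reduction from $\cH'$ to $\cH''$ is a straightforward application of \cite[Theorem A1.4]{BL}, and the theorem follows by assembling \eqref{e.30} and \eqref{e.33}.
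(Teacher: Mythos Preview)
Your proposal is correct and follows essentially the same approach as the paper: apply the gluing formula of \cite{PZZ2} in the configuration of Figure~\ref{figure1} to obtain \eqref{e.8}, compute the cylinder torsions $\cT_{a,a}$ and $\cT_{a,r}$ on $X_{[-1,1]}$ via the heat-kernel factorization and the zeta values $\zeta'_{D,D}(0)=-4\log 2$, $\zeta'_{D,N}(0)=-2\log 2$ (Theorem~\ref{t.5}), and then pass from $\cT_{\cH'}$ to $\cT_{\cH''}$ using \cite[Theorem~A1.4]{BL} and \cite[Lemma~2.8]{Zhu15} applied to the $\sqrt{2}$-rescaling maps $\alpha_k$. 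The bookkeeping obstacle you flag is exactly the technical content of \eqref{e.26}--\eqref{e.29} in the paper, and your outline handles it correctly.
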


\section{Comparison between the two versions of gluing formulas}\label{s3}

\begin{figure}
 \centering
  \includegraphics[width=10cm]{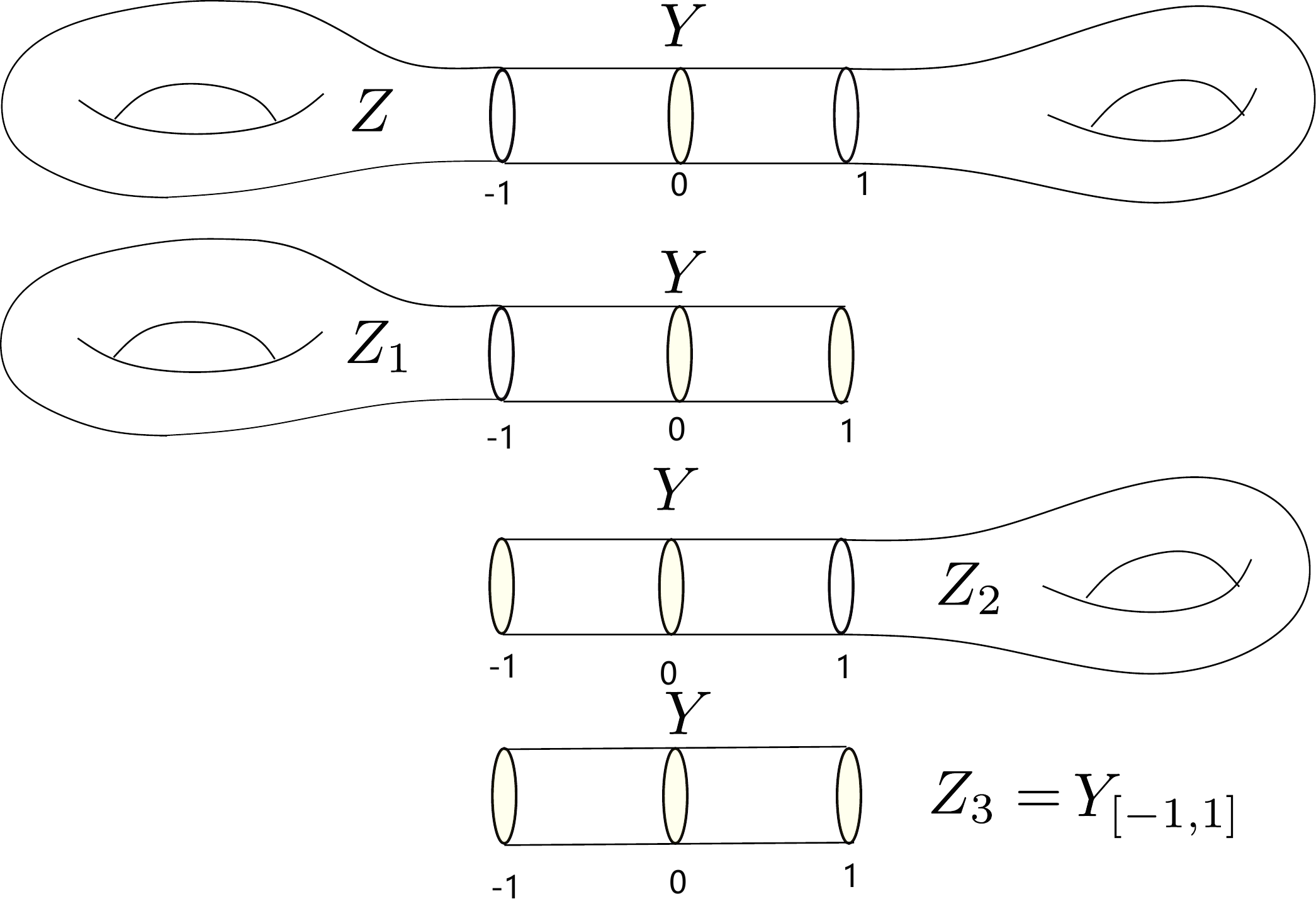}\\
\caption{Gluing relations}\label{figure3}
\end{figure}

Let $M\overset{\pi}\ra S$ be a smooth fibration of standard fiber $Z$. Let $X\subset M$ be a compact surface separating $M$ into two smooth fibrations $M'_1$ and $M'_2$. We assume that $X\ra S$ is also a smooth fibration over $S$ with standard fiber $Y$, such that $Z=Z'_1\cup_Y Z'_2$. Let $X_{[-1,1]}$ be a product neighborhood of $X\subset M$. Set
 \begin{align}\begin{aligned}\label{e.45}
M_1=M'_1\cup_{Y_{\set{0}}}X_{[0,1]},\quad M_2=X_{[-1,0]}\cup_{Y_{\set{0}}}M'_2,\quad M_3=X_{[-1,1]}.
\end{aligned}\end{align}
Then both $M_{i}, i=1,2,3$ have $\Xmoo$ as the cylinder parts, where the product structures of metrics \eqref{e.1}-\eqref{e.3} are assumed. This setting is described in Figure \ref{figure3}.

By the gluing formula of Bismut-Lott torsion forms in \cite{PZZ2}, the following identity holds modulo some exact differential forms over $S$,
\begin{align}\begin{aligned}\label{e.46}
\TMz-\TaMa-&\TaMb\\
&+\TaaMc+\cT_{\cH'}\doteq 0,
\end{aligned}\end{align}
where $\mathscr{T}_{\mathscr{H}'}$ is the torsion form associated to
\begin{align}\begin{aligned}\label{e.47}
\mathscr{H}':\quad\cdots \ra H^k(Z;F)\ra H^k(Z_1;F)\oplus H^k(Z_2;F)\ra H^k(Y_{[-1,1]};F)\ra\cdots.
\end{aligned}\end{align}
Applying the comparison formula in Theorem \ref{t.1} and \eqref{e.33}, we have
\begin{align}\begin{aligned}\label{e.48}
&\TaMa-\cTX\\
&\doteq \TrMa+\cT_{\cH''}-\log2 \rk(F)\chi(Y),
\end{aligned}\end{align}
where $\mathscr{T}_{\mathscr{H}''}$ is the torsion form associated to
\begin{align}\begin{aligned}\label{e.49}
\mathscr{H}'':\quad\cdots \ra H^k(Z_1,Y;F)\ra H^k(Z_1;F) \ra H^k(Y_{[-1,1]};F)\ra\cdots.
\end{aligned}\end{align}
By Theorem \ref{t.5} we have $\TaaMc=\cTX-2\log 2\rk(F)\chi(Y)$, then by \eqref{e.46} and \eqref{e.48} we get
\begin{align}\begin{aligned}\label{e.50}
&\TMz-\TrMa-\TaMb-\cT_{\cH}\\
&\qquad\doteq \cT_{\cH''}-\cT_{\cH'}-\cT_{\cH}+\log 2\rk(F)\chi(Y),
\end{aligned}\end{align}
where $\mathscr{T}_{\mathscr{H}}$ is the torsion form associated to
\begin{align}\begin{aligned}\label{e.51}
\mathscr{H}:\quad\cdots \ra H^k(Z_1,Y;F)\ra H^k(Z;F) \ra H^k(Z_2;F)\ra\cdots.
\end{aligned}\end{align}

\begin{lemma}\label{l.2}The following identity holds modulo some exact differential forms,
\begin{align}\begin{aligned}\label{e.52}
\cT_{\cH'}\doteq \cT_{\cH''}-\cT_{\cH}.
\end{aligned}\end{align}
\end{lemma}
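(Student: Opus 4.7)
The plan is to derive \eqref{e.52} by purely algebraic manipulation of Bismut--Lott torsion forms of long exact sequences, in the same spirit as the argument \eqref{e.31}--\eqref{e.33} earlier in the paper. I would first realize $\cH$, $\cH'$, and $\cH''$ as the cohomology long exact sequences of three canonically related short exact sequences of fiberwise twisted de Rham complexes: $\cH$ from the pair $(Z,Z_2)$ (using the excision identification $\Omega^\bullet(Z,Z_2;F)\simeq\Omega^\bullet(Z_1,Y;F)$), $\cH''$ from the pair $(Z_1,Y)$ (with the cohomology of $Y$ identified with that of $Y_{[-1,1]}$ via harmonic extension), and $\cH'$ from the Mayer--Vietoris SES for the cover $Z=Z_1\cup Z_2$ with overlap $Y_{[-1,1]}$.

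These three short exact sequences fit into a single $3\times 3$ commutative diagram of fiberwise de Rham complexes with exact rows and columns:
\begin{equation*}
\begin{array}{ccccc}
\Omega^\bullet(Z_1,Y;F) & \to & \Omega^\bullet(Z;F) & \to & \Omega^\bullet(Z_2;F) \\
\downarrow & & \downarrow & & \downarrow\mathrm{id} \\
\Omega^\bullet(Z_1;F) & \to & \Omega^\bullet(Z_1;F)\oplus\Omega^\bullet(Z_2;F) & \to & \Omega^\bullet(Z_2;F) \\
\downarrow & & \downarrow & & \downarrow \\
\Omega^\bullet(Y_{[-1,1]};F) & \to & \Omega^\bullet(Y_{[-1,1]};F) & \to & 0
\end{array}
\end{equation*}
Row~$1$ yields the SES whose LES is $\cH$; Column~$1$ yields the pair SES whose LES is $\cH''$; Column~$2$ is the Mayer--Vietoris SES whose LES is $\cH'$. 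The remaining Row~$2$ is a split direct sum, while Row~$3$ and Column~$3$ are identity SESs; the LESs associated to these three rows/columns have vanishing torsion forms modulo exact.

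Next I would apply \cite[Theorem A1.4]{BL} to each of the six rows and columns. Equating the alternating sum along rows with the alternating sum along columns (the standard consistency condition for a bicomplex), the Bismut--Lott torsions of the chain complexes at the nine nodes telescope and cancel pairwise, and only the three nontrivial LES torsion forms survive, giving $\cT_\cH\doteq\cT_{\cH''}-\cT_{\cH'}$ modulo exact, which is equivalent to \eqref{e.52}.

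The main obstacle I anticipate is verifying that the $L^2$ Hodge metrics on cohomology are compatible across all maps in the diagram. In particular, the excision identification $\Omega^\bullet(Z,Z_2;F)\simeq\Omega^\bullet(Z_1,Y;F)$ must be an $L^2$-isometry on harmonic representatives (which holds thanks to the product structures \eqref{e.1}--\eqref{e.3}), and the identification $\Omega^\bullet(Y;F)\hookrightarrow\Omega^\bullet(Y_{[-1,1]};F)$ via harmonic extension involves a $\sqrt 2$ factor as in \eqref{e.31}; this factor must be tracked carefully so that it appears symmetrically in Columns~$1$ and~$2$ and cancels in the final identity, leaving no residual $\log 2$ correction term.
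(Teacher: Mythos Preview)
Your proposal takes a genuinely different route from the paper, and it has a real gap.

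The paper's proof is purely finite-dimensional. It never touches the de Rham complexes; instead it truncates the three long exact sequences $\cH$, $\cH'$, $\cH''$ of cohomology bundles at degree $3k-1$ and proves by induction on $k$, using \cite[Lemma 2.7]{Zhu15} and \cite[Lemma 3.1(d)]{Ma02}, that
\[
\cT(-\cH''(3k-1))+\cT(\cH(3k-1))=\cT(-\cH'(3k-1))
\]
for each $k$. Letting $k$ run to the top degree and using $\cT((-1)^l\cH)=(-1)^l\cT(\cH)$ gives \eqref{e.52}. Everything happens inside the category of finite-rank flat bundles on $S$ with their Hodge $L^2$-metrics.

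Your approach, by contrast, lifts to a $3\times 3$ square of fiberwise de Rham complexes and appeals to \cite[Theorem~A1.4]{BL} on each row and column. The problem is that Theorem~A1.4 is stated and proved for \emph{finite-dimensional} flat cochain complexes; it does not apply as stated to the infinite-dimensional $\Omega^\bullet(Z_i,F)$. The identity you need for each row or column---that the ``defect'' $\cT(B)-\cT(A)-\cT(C)$ in a short exact sequence of de Rham complexes equals the torsion of the associated cohomology LES---is, for Row~1 and Column~2 of your diagram, precisely (a form of) the gluing formula and the comparison formula, the very statements the lemma is being used to relate. So the telescoping argument is circular unless you first establish those identities independently. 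There are also secondary problems with the diagram itself: Row~1 is not exact in the smooth category (extension by zero across $Y$ does not produce smooth forms), and the bottom-left entry of Column~1 should be $\Omega^\bullet(Y;F)$ rather than $\Omega^\bullet(Y_{[-1,1]};F)$, so the square does not literally commute as written.

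The conceptual picture you have in mind---that the three long exact sequences are the three strands of a braid coming from the cover $Z=Z_1\cup Z_2$---is correct, and the paper's truncation-and-induction argument is exactly the algebraic device that extracts \eqref{e.52} from that braid at the cohomology level. If you want to salvage your approach, discard the de Rham complexes entirely, write the braid diagram of cohomology bundles, and invoke \cite[Lemma~3.1(d)]{Ma02} degree by degree; you will then recover the paper's proof.
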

\begin{proof}
The main idea to prove \eqref{e.52} is to decompose the long exact sequences of flat vector bundles into short exact sequences. We essentially follow the techniques used in \cite[\S 3.1]{Ma02}.

We start from the right hand side of \eqref{e.52}. We denote
\begin{align}\begin{aligned}\label{e.53}
\mathscr{H}'':\quad\cdots \rac{\delta''_{k-1}} H^k(Z_1,Y;F)\rac{\alpha''_k} H^k(Z_1;F) \rac{\beta''_k} H^k(Y_{[-1,1]};F)\rac{\delta''_{k}}\cdots.
\end{aligned}\end{align}
\begin{align}\begin{aligned}\label{e.54}
\mathscr{H}:\quad\cdots \rac{\delta_{k-1}} H^k(Z_1,Y;F)\rac{\alpha_k} H^k(Z;F) \rac{\beta_k} H^k(Z_2;F)\rac{\delta_{k}}\cdots.
\end{aligned}\end{align}
We use $(-1)^l\cH''$ to denote the same sequence as $\cH''$ but the degree of each component increased by $l$.
Then we have
\begin{align}\begin{aligned}\label{e.55}
-\mathscr{H}''&:\quad\cdots \rac{\beta''_{k-1}}&H^{k-1}(Y_{[-1,1]};F) &\rac{\delta''_{k-1}}& H^k(Z_1,Y;F)& \rac{\alpha''_k}& H^{k}(Z_1;F)&\rac{\beta''_{k}}\cdots,\\
\mathscr{H}&:\quad\cdots \rac{\delta_{k-1}}& H^k(Z_1,Y;F) &\rac{\alpha_{k}}&H^k(Z;F) & \rac{\beta_k}& H^k(Z_2;F)&\rac{\delta_{k}}\cdots.
\end{aligned}\end{align}
For a long exact sequence $$\cE:\quad 0\ra E^0\rac{f_0} E^1\rac{f_1}\cdots E^k\rac{f_k}E^{k+1}\rac{f_{k+1}}\cdots\rac{f_{n-1}}E^n\ra 0,$$
we set $\cE(k)$ to be its truncation at the degree $k$, i.e.,
\begin{align}\begin{aligned}\label{e.56}
\cE(k):\quad 0\ra E^0\rac{f_0} E^1\rac{f_1}\cdots E^k\rac{f_k}\Im(f_k)\ra 0.
\end{aligned}\end{align}
By using \cite[Lemma 2.7]{Zhu15} and \cite[Lemma 3.1(d)]{Ma02}, we will finish the proof by induction on the degree of truncation.

For $k=1$, on one hand we have
\begin{align}\begin{aligned}\label{e.57}
-\mathscr{H}''(3k-1)&:\quad 0 \ra &0 \qquad&\ra& H^0(Z_1,Y;F)& \rac{\alpha''_0}& H^{0}(Z_1;F)&\rac{\beta''_{0}} \Im(\beta''_{0})\ra 0,\\
\mathscr{H}(3k-1)&:\quad 0 \ra & H^0(Z_1,Y;F) &\rac{\alpha_{0}}&H^0(Z;F) & \rac{\beta_0}& H^0(Z_2;F)&\rac{\delta_{0}}\Im(\delta_{0})\ra 0.
\end{aligned}\end{align}
On the other hand, we have
\begin{align}\begin{aligned}\label{e.58}
-\mathscr{H}'(3k-1)&:\quad 0 \ra & 0 &\ra &H^0(Z;F) & \rac{\alpha'_0}& H^0(Z_1,Y;F)\oplus H^0(Z_2;F)&\rac{\delta'_{0}}\Im(\delta'_{0})\ra 0.
\end{aligned}\end{align}
By \cite[Lemma 3.1(d)]{Ma02}, we have
\begin{align}\begin{aligned}\label{e.59}
\cT(-\mathscr{H}''(2))+\cT(\mathscr{H}(2))=\cT(-\mathscr{H}'(2)).
\end{aligned}\end{align}
In the same way, we can prove
\begin{align}\begin{aligned}\label{e.60}
\cT(-\mathscr{H}''(3k-1))+\cT(\mathscr{H}(3k-1))=\cT(-\mathscr{H}'(3k-1)).
\end{aligned}\end{align}
When $k$ equals $n$ in \eqref{e.60}, we get
\begin{align}\begin{aligned}\label{e.61}
\cT(-\mathscr{H}'')+\cT(\mathscr{H})=\cT(-\mathscr{H}').
\end{aligned}\end{align}
Since we have $\cT((-1)^k\cH)=(-1)^k\cT(\cH)$, Equation \eqref{e.52} follows from \eqref{e.61}.
\end{proof}

 By Lemma \ref{l.2} and \eqref{e.50}, we prove the following theorem.

 \begin{thm}\label{t.4}Under the assumptions of product structures \eqref{e.1}-\eqref{e.3}, we have
   \begin{align}\begin{aligned}\label{e.62}
&\TMz-\TrMa-\TaMb\\
&\qquad\doteq \cT_{\cH} + \log 2\rk(F)\chi(Y),
\end{aligned}\end{align}
where $\mathscr{T}_{\mathscr{H}}$ is the torsion form associated to
\begin{align}\begin{aligned}\label{e.63}
\mathscr{H}:\quad\cdots \ra H^k(Z_1,Y;F)\ra H^k(Z;F) \ra H^k(Z_2;F)\ra\cdots.
\end{aligned}\end{align}
 \end{thm}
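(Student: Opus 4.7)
The plan is to deduce Theorem \ref{t.4} by combining the already-derived identity \eqref{e.50} with the purely homological Lemma \ref{l.2}. Equation \eqref{e.50} already isolates the desired left-hand side $\TMz-\TrMa-\TaMb-\cT_{\cH}$ modulo exact forms and expresses it as $\cT_{\cH''}-\cT_{\cH'}-\cT_{\cH}+\log 2\,\rk(F)\chi(Y)$. Since Lemma \ref{l.2} asserts $\cT_{\cH'}\doteq\cT_{\cH''}-\cT_{\cH}$, the three torsion forms of cohomology sequences cancel identically modulo exact forms, leaving only the constant $\log 2\,\rk(F)\chi(Y)$, which is exactly \eqref{e.62}. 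So the theorem is a one-line substitution once \eqref{e.50} and Lemma \ref{l.2} are available.

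If I were starting from scratch, I would build \eqref{e.50} as a synthesis of three independent ingredients. First, I would apply the general gluing formula \eqref{e.46} of \cite{PZZ2} to the three-piece decomposition depicted in Figure \ref{figure3}, where $M_3=X_{[-1,1]}$ carries absolute boundary conditions at both ends. Second, I would use the comparison formula Theorem \ref{t.1} to convert the absolute torsion $\TaMa$ of the piece with cylindrical end into the relative torsion $\TrMa$ plus a boundary torsion $\cTX$, a correction $\cT_{\cH''}$ from a long exact sequence, and an explicit $\log 2$-constant. Third, I would substitute the closed-form evaluation $\TaaMc=\cTX-2\log 2\,\rk(F)\chi(Y)$ from Theorem \ref{t.5} for the torsion on the pure cylinder. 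Collecting terms, the $\cTX$ contributions cancel, and the residual constant $+\log 2\,\rk(F)\chi(Y)$ emerges from balancing $+\tfrac{3}{2}\log 2$ (from Theorem \ref{t.1}), $-2\log 2$ (from Theorem \ref{t.5}), and $-\tfrac{1}{2}\log 2$ (from the $Y_{[-1,1]}$-versus-$Y$ alignment \eqref{e.33}).

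The only subtle point, and the genuine obstacle were it not already treated, is Lemma \ref{l.2} itself: reconciling the three long exact sequences $\cH,\cH',\cH''$ requires decomposing them into short exact sequences of flat cohomology bundles, tracking signs arising from degree shifts $(-1)^k\cH''$, and verifying that the $L^2$-metrics inherited from fiberwise Hodge theory match across the three sequences so that the torsion-of-exact-sequences algebra of \cite[\S 3.1]{Ma02} applies verbatim. Since this bookkeeping has been completed in the proof of Lemma \ref{l.2}, no further analytic or homological input is required, and Theorem \ref{t.4} follows at once from \eqref{e.50} and Lemma \ref{l.2}.
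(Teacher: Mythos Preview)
Your proposal is correct and follows exactly the paper's own argument: the paper states immediately before Theorem \ref{t.4} that it follows from Lemma \ref{l.2} and \eqref{e.50}, which is precisely the substitution you describe. Your supplementary account of how \eqref{e.50} is assembled from \eqref{e.46}, Theorem \ref{t.1}, Theorem \ref{t.5}, and \eqref{e.33} also matches the paper's derivation (though the informal sign bookkeeping of the three $\log 2$ contributions should be read with care, since whether each appears with a $+$ or $-$ depends on which side of the equation it sits when the cancellation is carried out).
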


\textbf{Remark}: By using the anomaly formula of Bismut-Lott torsion forms \cite[Theorem 1.5]{Zhu16}, Proposition \ref{p.1} and \eqref{e.33}, one can show that the gluing formula in the setting of Theorem \ref{t.4} is equivalent to that of Theorem \ref{t.2} (modulo some exact differential forms).

\bibliographystyle{alpha}

\end{document}